\documentclass[a4paper,10pt]{amsart}
\usepackage{amsmath,amsthm,amssymb,amsfonts,enumerate,color,bbm}

\oddsidemargin = 9pt \evensidemargin = 9pt \textwidth = 440pt

\newcommand{\abs}[1]{\left\vert#1\right\vert}
\newcommand{\set}[1]{\left\{#1\right\}}
\newcommand{\Real}{\mathbb{R}}
\renewcommand{\a}{\alpha}
\renewcommand{\b}{\beta}
\newcommand{\s}{\sigma}
\renewcommand{\t}{\theta}
\renewcommand{\v}{\varphi}

\renewcommand{\P}{\mathcal{P}}
\newcommand{\J}{\mathcal{J}}
\newcommand{\K}{\mathcal{K}}
\renewcommand{\S}{\mathbb{S}}

\newcommand{\CP}{P_l(\mathbb{C})}
\newcommand{\HP}{P_l(\mathbb{H})}
\newcommand{\Ca}{P_2(\mathbb{C}\mathbbm{a}\mathbbm{y})}
\newcommand{\B}{\mathbb{B}}

\newtheorem{thm}{Theorem}[section]

\newtheorem{lem}[thm]{Lemma}

\theoremstyle{definition}

\newtheorem{rem}[thm]{Remark}

\allowdisplaybreaks

\numberwithin{equation}{section}

\author[\'O. Ciaurri, L. Roncal, and P. R. Stinga]{\'Oscar Ciaurri \and Luz Roncal \and Pablo Ra\'ul Stinga}
\address{Departamento de Matem\'aticas y Computaci\'on\\
         Universidad de La Rioja\\
         26004 Logro\~no, Spain}
\email{oscar.ciaurri@unirioja.es, luz.roncal@unirioja.es}

\address{Department of Mathematics\\
	The University of Texas at Austin\\
	1 University Station C1200\\
	78712-1202 Austin, TX\\
	United States of America}
\email{stinga@math.utexas.edu}

\thanks{The first and second authors were partially supported by grant MTM2012-36732-C03-02 from Spanish Government. This research was developed while the third author was a researcher at Departmento de Matem\'aticas y Computaci\'on of Universidad de La Rioja, Spain, under grant COLABORA 2010/01 from Planes Riojanos de I+D+I. The third author was partially supported by grant MTM2011-28149-C02-01 from Spanish Government}

\keywords{Analysis on compact Riemannian symmetric spaces of rank one, two-point homogeneous spaces, Laplace--Beltrami operator, fractional integral, Jacobi expansions, weighted inequality, mixed norm spaces}

\subjclass[2010]{Primary: 43A85, 58J05, 42C10. Secondary: 42B35, 33C45}

\begin{document}

\title[Fractional integrals on Riemannian symmetric spaces]{Fractional integrals on compact \\ Riemannian symmetric spaces of rank one}

\begin{abstract}
In this paper we study mixed norm boundedness for fractional integrals related to Laplace--Beltrami operators on compact Riemannian symmetric spaces of rank one. The key point is the analysis of weighted inequalities for fractional integral operators associated to trigonometric Jacobi polynomials expansions. In particular, we find a novel sharp estimate for the Jacobi fractional integral kernel with explicit dependence on the type parameters.
\end{abstract}

\maketitle

\section{Introduction}

Let $M$ be a Riemannian symmetric space of compact type and rank one or, what is the same thing, a compact two-point homogeneous space. According to H.-C. Wang \cite{Wang}, all the compact two-point homogeneous spaces are
\begin{enumerate}
    \item The sphere $\S^d\subset\Real^{d+1}$, $d\geq1$,
    \item The real projective space $P_d(\Real)$, $d\geq2$,
    \item The complex projective space $\CP$, $l\geq2$,
    \item The quaternionic projective space $\HP$, $l\geq2$,
    \item The Cayley plane $\Ca$.
\end{enumerate}

Fourier Analysis on Riemannian symmetric spaces has been studied by many authors. Let us briefly mention some of them. For the non-compact case, S. Helgason \cite{Helgason-Duality, Helgason-book, Helgason-libro, Helgason-Bulletin} found the analogous of classical Fourier Analysis. In \cite{Anker-Annals} J.-P. Anker gave results on $L^p$ multipliers. Heat kernel and Green function estimates were found by Anker and L. Ji in \cite{Anker-Ji}. In the compact case, the Radon transform was studied by Helgason in \cite{Helgason-Acta}. In \cite{Sherman-Bulletin, Sherman}, T. O. Sherman developed the counterpart of Helgason's theory for compact type spaces. More recently, S. Thangavelu considered in \cite{Thangavelu} holomorphic Sobolev spaces in compact symmetric spaces, while in \cite{OS} G. \'Olafsson and H. Schlichtkrull presented a local Paley--Wiener theorem. The Harmonic Analysis for compact Lie groups can be found in the monograph by E. M. Stein \cite{Stein}.

On the other hand, Lebesgue mixed norm spaces $L^p(L^r)$ are of interest in Harmonic Analysis and Partial Differential Equations. These spaces were first systematically studied by A. Benedek and R. Panzone in \cite{Benedek-Panzone}. They also considered boundedness of some classical operators (one of them being the fractional integral, which is central to understand Sobolev spaces). In the context of Partial Differential Equations we can mention the remarkable paper by M. Keel and T. Tao \cite{Keel-Tao} on Strichartz estimates for the Schr\"odinger and wave equations. Strichartz estimates refer to boundedness in spaces $L^p_tL^q_x$ ($t$ for time and $x$ for space) and are useful for proving well-posedness of nonlinear equations.

Since a point $x\in\Real^d$ can be written in polar coordinates as $x=r\theta$, for $r>0$ and $x\in\S^{d-1}$, one can regard a function on $\Real^d$ as a function in the two variables: radial $r$ and angular $\theta$. In this way the mixed norm space $L^p_{\mathrm{rad}}(L^2_{\mathrm{ang}})$ arises. This special mixed norm space has a crucial role in Harmonic Analysis. For example, consider the problem of almost everywhere convergence of Bochner--Riesz means for the Fourier transform. One is then led to consider the related maximal operator and to establish its $L^p(\Real^d)$-boundedness properties, a quite difficult task. J. L. Rubio de Francia showed in \cite{Rubio-Duke} that such a maximal operator is bounded in $L^p_{\mathrm{rad}}(L^2_{\mathrm{ang}})$, which is a larger space than $L^p(\Real^d)$ when $p\geq2$, and for the sharp range of exponents in all even dimensions, solving in this case the a.e. convergence problem.

In this paper we focus on mixed norm estimates for fractional integrals on compact Riemannian symmetric spaces of rank one. Each of the manifolds $M$ listed in (1)--(5) above admits essentially only one invariant second order differential operator, the Laplace--Beltrami operator $\tilde{\Delta}_M$, see \cite[Chapter~X]{Helgason-book}. With this we can define an associated fractional integral operator $(-\Delta_M)^{-\sigma/2}$. In principle, it is not clear how to handle this operator in $L^p(M)$. By introducing \textit{polar} coordinates on $M$ appropriate mixed norm spaces $L^p(L^2(M))$ can be defined. These spaces can be seen as versions on $M$ of the $L^p_{\mathrm{rad}}(L^2_{\mathrm{ang}})$ spaces used by Rubio de Francia. See Section \ref{Section:Riemannian}. Our aim is then to prove $L^p(L^2(M))-L^q(L^2(M))$ estimates for $(-\Delta_M)^{-\sigma/2}$.

The first two main results are the following.

\begin{thm}[Sphere and real projective space]\label{Thm:esfera}
Let $M=\S^d$ or $P_d(\Real)$, for $d\geq2$. Let $0<\sigma<1$ and $p,q$ be exponents such that
$$\frac{2d}{d+1}<p\le q<\frac{2d}{d-1},\quad\hbox{and}\quad\frac{1}{q}\ge\frac{1}{p}-\frac{\sigma}{d}.$$
Then there exists a constant $C$ such that for any $f\in L^p(L^2(M))$,
$$\|(-\Delta_{M})^{-\sigma/2}f\|_{L^q(L^2(M))}\leq C\|f\|_{L^p(L^2(M))}.$$
\end{thm}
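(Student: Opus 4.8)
The plan is to pass to geodesic polar coordinates on $M$, separate variables, and thereby reduce the mixed norm estimate to a single weighted one--dimensional inequality for an explicit positive integral operator built from the Jacobi fractional integral kernel. Writing points of $M$ as $(\t,\omega')\in(0,L)\times\S^{d-1}$ around a fixed pole ($L=\pi$ for $\S^d$, $L=\pi/2$ for $P_d(\Real)$), the Riemannian measure becomes a constant multiple of $(\sin\t)^{d-1}\,d\t\,d\omega'$ and
\[
\Delta_M=\partial_\t^2+(d-1)\cot\t\,\partial_\t+\frac{1}{\sin^2\t}\,\Delta_{\S^{d-1}}.
\]
Expanding $f(\t,\omega')=\sum_{k\ge0}\sum_j f_{k,j}(\t)\,Y_{k,j}(\omega')$ in spherical harmonics and using $\Delta_{\S^{d-1}}Y_{k,j}=-k(k+d-2)Y_{k,j}$ decouples the operator over the indices $(k,j)$; setting $f_{k,j}(\t)=(\sin\t)^{k}g_{k,j}(\t)$, each $g_{k,j}$ is naturally expanded in the trigonometric Jacobi polynomials of parameters $(\a_k,\b_k)$ ($\a_k=\b_k=k+\tfrac{d-2}{2}$ for $\S^d$, and the closely analogous pair for $P_d(\Real)$), orthonormal with respect to $d\mu_k(\t)=(\sin\t)^{2k+d-1}\,d\t$ on $(0,L)$, while $-\Delta_M$ acts on the $n$--th component by the eigenvalue $\lambda_{n,k}=(n+k)(n+k+d-1)$. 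Hence, with the usual convention that $(-\Delta_M)^{-\s/2}$ annihilates constants, its restriction to the block $(k,j)$ is the integral operator $S_k$ with kernel $(\sin\t)^{k}(\sin\v)^{k}\,\J_k(\t,\v)$ against $(\sin\v)^{d-1}\,d\v$, where $\J_k$ is the Jacobi fractional integral kernel of order $\s$ and parameters $(\a_k,\b_k)$.

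Next I would reduce the vector--valued inequality to a scalar one using positivity. By Parseval in the angular variable, $\|f\|_{L^q(L^2(M))}=\big\|(\sum_{k,j}|f_{k,j}|^2)^{1/2}\big\|_{L^q((\sin\t)^{d-1}d\t)}$, and likewise for $(-\Delta_M)^{-\s/2}f$ with $f_{k,j}$ replaced by $S_kf_{k,j}$. Using the subordination identity $\lambda^{-\s/2}=\frac{1}{\Gamma(\s/2)}\int_0^\infty e^{-t\lambda}t^{\s/2-1}\,dt$ and positivity of the Jacobi heat kernels, each kernel $(\sin\t)^k(\sin\v)^k\J_k$ is nonnegative (for $k=0$ modulo a smooth, harmless error coming from the removed constant). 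Consequently, if one can produce a $k$--independent majorant
\[
0\le(\sin\t)^k(\sin\v)^k\,\J_k(\t,\v)\le G(\t,\v)\qquad(k\ge0),
\]
then Minkowski's integral inequality in the $\ell^2$ variable dominates the $\ell^2$--valued operator $(f_{k,j})\mapsto(S_kf_{k,j})$ pointwise by the scalar positive operator $T$ with kernel $G$, and the theorem follows once $T\colon L^p((\sin\t)^{d-1}d\t)\to L^q((\sin\t)^{d-1}d\t)$ is bounded.

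The heart of the matter --- and the main obstacle --- is constructing such a majorant $G$, i.e. estimating $\J_k$ with \emph{explicit} dependence on $(\a_k,\b_k)$, hence on $k$, so that the decaying prefactor $(\sin\t)^k(\sin\v)^k$ compensates any growth in $k$. I would write $\J_k$ through the Jacobi heat semigroup as above, split the kernel sum at $n\sim|\t-\v|^{-1}$, and feed in \emph{parameter--uniform} forms of the classical Jacobi asymptotics (Mehler--Heine near the poles, Hilb/Darboux in the interior) together with direct bounds for the low--eigenvalue terms. Near the diagonal this should reproduce the singularity $(\sin\t\sin\v)^{-(d-1)/2}|\t-\v|^{\s-1}$, the point being that the $k$--dependent powers of $\sin\t,\sin\v$ coming from the normalization of $\J_k$ cancel exactly against $(\sin\t)^k(\sin\v)^k$; near the poles the behavior is governed by powers of $\t$ and $L-\t$ that are absorbed by the prefactor. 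The resulting $k$--independent $G(\t,\v)$ --- essentially $(\sin\t\sin\v)^{-(d-1)/2}|\t-\v|^{\s-1}$ with mild improvements at the endpoints --- is precisely the sharp Jacobi fractional integral kernel estimate announced in the abstract, and this is the step where $0<\s<1$ is used, keeping $\J_k$ a genuine fractional integral kernel.

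Finally I would prove that the explicit positive operator $T$ maps $L^p((\sin\t)^{d-1}d\t)$ to $L^q((\sin\t)^{d-1}d\t)$. The substitution $F=(\sin\v)^{(d-1)/2}f$ turns this into the one--dimensional weighted inequality $I_\s\colon L^p((\sin\t)^{(d-1)(1-p/2)}d\t)\to L^q((\sin\t)^{(d-1)(1-q/2)}d\t)$ on $(0,L)$, where $I_\s$ has kernel $|\t-\v|^{\s-1}$. Away from the poles this is the classical Hardy--Littlewood--Sobolev inequality, which forces exactly $p\le q$ and $\tfrac1q\ge\tfrac1p-\tfrac{\s}{d}$ (the scaling exponents combine so that $d$, not $1$, appears); at the poles one is left with power--weighted Hardy--type inequalities, and a direct computation of the endpoint exponents shows the source weight is admissible iff $(d-1)(1-\tfrac p2)<p-1$, i.e. $p>\tfrac{2d}{d+1}$, and the target weight is locally integrable iff $(d-1)(1-\tfrac q2)>-1$, i.e. $q<\tfrac{2d}{d-1}$ --- exactly the range in the statement. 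Assembling the four steps yields the claimed estimate.
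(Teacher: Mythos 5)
Your overall scheme is the same as the paper's: geodesic polar coordinates, expansion in spherical harmonics of $\mathbb{S}^{d-1}$, reduction of each block to a conjugated Jacobi fractional integral with parameters growing linearly in the harmonic degree $k$, cancelation of the $(\sin\theta)^k$ factors against the kernel, and finally a two-power-weighted one-dimensional inequality for the kernel $|\theta-\varphi|^{\sigma-1}$, whose admissibility computations ($p>\tfrac{2d}{d+1}$, $q<\tfrac{2d}{d-1}$, $\tfrac1q\ge\tfrac1p-\tfrac\sigma d$) you carry out correctly and which match the paper's verification of the Bernardis--Salinas two-weight condition. However, there is a genuine gap at the step you yourself identify as ``the heart of the matter'': the $k$-uniform majorant $G(\theta,\varphi)$. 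You propose to obtain it by splitting the spectral sum at $n\sim|\theta-\varphi|^{-1}$ and invoking ``parameter-uniform forms'' of Mehler--Heine and Hilb/Darboux asymptotics for Jacobi polynomials. No such uniform asymptotics are available off the shelf: the classical estimates (Szeg\H{o}) are asymptotic in the degree $n$ for \emph{fixed} type parameters, with error terms whose dependence on $(\alpha,\beta)$ is uncontrolled, whereas here you need bounds that remain uniform as $\alpha_k=\beta_k=k+\tfrac{d-2}{2}\to\infty$ simultaneously with the degree, and with the precise power $(\sin\theta\sin\varphi)^{-(k+(d-1)/2)}$ so that the prefactor $(\sin\theta\sin\varphi)^k$ cancels exactly; any loss of a constant growing in $k$, or of an extra power of $\sin\theta$, destroys the argument. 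Establishing this uniform bound is exactly the paper's main technical contribution (the sharp kernel estimate of Theorem \ref{Thm:Jacobi kernel}), and it is proved there not by asymptotics at all but by writing the Poisson kernel through the Dijksma--Koornwinder product formula and estimating the resulting integrals against $d\Pi_\alpha$, $d\Pi_\beta$ with explicit constants (Lemma \ref{lem:Sharp estimate}); as stated, your proposal assumes the conclusion of this step rather than proving it.

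Two smaller points. First, you work with the unshifted Laplace--Beltrami operator (eigenvalues $(n+k)(n+k+d-1)$, with the convention that constants are annihilated), whereas the operator in the theorem is the shifted one, $-\Delta_{\mathbb{S}^d}=-\tilde\Delta_{\mathbb{S}^d}+(\tfrac{d-1}{2})^2$, whose multiplier $(n+\tfrac{d-1}{2})^{-\sigma}$ matches the Jacobi operator $\mathcal{J}^{\alpha_k,\alpha_k}$ exactly; with your convention the block operators are not literally Jacobi fractional integrals, and the comparison (including your ``harmless error'' for the constant mode) would need an argument. Second, for $P_d(\Real)$ the paper simply identifies functions with even functions on $\mathbb{S}^d$ and quotes the sphere case, which is cleaner than re-deriving the expansion on $(0,\pi/2)$; your route can be made to work but the ``closely analogous pair'' of Jacobi parameters would have to be computed. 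Your positivity-plus-Minkowski reduction of the $\ell^2$-valued estimate to a scalar one is fine and is an acceptable substitute for the paper's use of the Marcinkiewicz--Zygmund lemma.
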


\begin{thm}[Complex and quaternionic projective spaces and Cayley plane]\label{Thm:proyectivos}
Let $M$ be one of the manifolds $\CP$, $\HP$, for $l\geq2$, or $\Ca$. Define $d=2,4,8$ and $m=l-2,2l-3,3$ for $\CP$, $\HP$ and $\Ca$ respectively. Let $0<\sigma<1$ and $p,q$ be exponents such that
$$\max\left\{\frac{2m+2}{m+3/2},\frac{2d}{2d-1/2}\right\}<p\le q<\min\left\{\frac{2m+2}{m+1/2},\frac{2d}{2d-3/2}\right\},$$
and
$$\frac{1}{q}\ge\frac{1}{p}-\min\left\{\frac{\sigma}{2m+2},\frac{\sigma}{2d}\right\}.$$
Then there exists a constant $C$ such that for any $f\in L^p(L^2(M))$,
$$\|(-\Delta_M)^{-\sigma/2}f\|_{L^q(L^2(M))}\leq C\|f\|_{L^p(L^2(M))}.$$
\end{thm}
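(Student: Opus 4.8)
The plan is to run the machinery used for Theorem~\ref{Thm:esfera}: introduce polar coordinates on $M$, diagonalize $(-\Delta_M)^{-\s/2}$ with respect to the angular decomposition so that it becomes a family of fractional integrals attached to trigonometric Jacobi expansions, and then prove a weighted $L^p$--$L^q$ inequality for those, uniformly in the angular index. The one structural novelty for $\CP$, $\HP$ and $\Ca$ is that the two type parameters of the relevant Jacobi operator are genuinely unequal, and it is exactly this asymmetry between the behaviour near the pole and near the cut locus of $M$ that turns each single condition on $p,q$ in Theorem~\ref{Thm:esfera} into the two competing conditions in the present statement, with the dimensions $2m+2$ and $2d$.

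\emph{Polar coordinates and reduction to Jacobi operators.} Fix a pole $o\in M$ and pass to geodesic polar coordinates, as set up in Section~\ref{Section:Riemannian}. The isotropy group at $o$ acts on each geodesic sphere, and decomposing $L^2$ of those spheres into isotypic components yields $L^2(M)=\bigoplus_{n\ge0}\mathcal H_n$ and an identification of $L^p(L^2(M))$ with a mixed-norm space of families $\set{a_n(r)}_{n\ge0}$, measured in $L^p$ with respect to a power-type radial weight $w$ on $(0,\pi)$ and in $\ell^2$ in $n$. Since $-\Delta_M$ commutes with the isotropy action it preserves each $\mathcal H_n$, and on $\mathcal H_n$, after the substitution removing the singular angular potential $\propto 1/\sin^2 r$, it becomes a one-dimensional Jacobi operator $\J^{(\a_n,\b_n)}$ plus a positive constant (its bottom eigenvalue on $\mathcal H_n$); the sizes of $\a_n$ and $\b_n$ reflect the local geometry of $M$ at the pole $o$ and at its cut locus respectively — whence the two effective dimensions $2m+2$ and $2d$ — and one of them grows linearly in $n$. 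Thus $(-\Delta_M)^{-\s/2}$ acts diagonally, sending $\set{a_n(r)}$ to $\set{(-\J^{(\a_n,\b_n)})^{-\s/2}a_n(r)}$.

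\emph{From the vector-valued to a scalar weighted estimate.} Writing each $(-\J^{(\a_n,\b_n)})^{-\s/2}$ by subordination from the Jacobi heat semigroup exhibits it as an integral operator with a positive kernel $K_n(r,s)$. Here we invoke the analytic heart of the paper — the sharp pointwise bound for the Jacobi fractional integral kernel with \emph{explicit} dependence on the type parameters — to produce a majorant $K_n(r,s)\le C\,K(r,s)$ with $C$ and $K$ \emph{independent of $n$}; the $n$-independence is the delicate point, since one of $\a_n,\b_n$ tends to infinity, but the powers of $\sin(r/2)$ and $\cos(r/2)$ introduced by the substitution concentrate $K_n$ enough to absorb that growth. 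Granting such a $K$, Minkowski's integral inequality gives, for each $r$,
\[
\Big(\sum_n\abs{(-\J^{(\a_n,\b_n)})^{-\s/2}a_n(r)}^2\Big)^{1/2}\le C\int_0^\pi K(r,s)\Big(\sum_n\abs{a_n(s)}^2\Big)^{1/2}w(s)\,ds,
\]
so Theorem~\ref{Thm:proyectivos} reduces to the scalar inequality $\norm{Tg}_{L^q(w)}\le C\norm{g}_{L^p(w)}$ for the single positive operator $Tg(r)=\int_0^\pi K(r,s)g(s)w(s)\,ds$.

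\emph{The scalar weighted inequality and the main obstacle.} This last bound we prove by splitting $(0,\pi)$ into the diagonal region $r\simeq s$ and the two off-diagonal/endpoint regions: on the diagonal $T$ behaves like a fractional integration of order $\s$ of Hardy--Littlewood--Sobolev type whose local dimension equals $2\a_n+2$ near $r=0$ and $2\b_n+2$ near $r=\pi$, while off the diagonal it is dominated by one-dimensional Hardy inequalities against the power weights $w$. Demanding boundedness simultaneously at both ends produces precisely the lower bound on $p$, the upper bound on $q$, and the gain condition $\frac1q\ge\frac1p-\min\set{\frac{\s}{2m+2},\frac{\s}{2d}}$, the $\max$ and $\min$ coming from having to meet the requirements of the two endpoints at once; when the two parameters agree one recovers the single condition of Theorem~\ref{Thm:esfera}. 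The real difficulty throughout is the kernel estimate used in the previous step: it must be sharp and quantitatively correct in $\a$ and $\b$ — strong enough that the majorant can be chosen uniform in the angular index $n$ despite the growing parameter, yet precise enough that the Stein--Weiss argument still runs over the full ranges of $p$ and $q$ claimed. Everything else is careful, if routine, bookkeeping with the two endpoints of $(0,\pi)$.
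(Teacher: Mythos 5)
Your plan is essentially the paper's proof: the paper likewise reduces, via Sherman's decomposition (implemented through the map $E$ onto the weighted ball $\B^{d+1}$, which is how $L^p(L^2(M))$ is defined there), to the $j$-indexed Jacobi fractional integrals $(\J^{m,\,d-1+2j})^{-\s/2}$ conjugated by $(\cos\tfrac{\theta}{2})^{2j}$, and then invokes Theorem \ref{Thm:Lp-Lq}, whose proof is exactly your uniform-majorant scheme: the sharp kernel bound of Theorem \ref{Thm:Jacobi kernel}, whose explicit $\a,\b$-dependence cancels against the conjugating weights and the measure, gives domination by a single power-weighted copy of $\mathcal{I}_\s$ uniformly in $j$. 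The only minor deviations are that the paper obtains the scalar two-power-weight $L^p$--$L^q$ bound from the Bernardis--Salinas criterion (Theorem \ref{Thm:BS}) rather than a direct Hardy/Stein--Weiss splitting, and passes to the $\ell^2$-valued estimate via Marcinkiewicz--Zygmund rather than Minkowski's inequality.
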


Our first two main results above will be consequences of the weighted vector valued $L^p(\ell^2)-L^q(\ell^2)$ estimate for Jacobi fractional integrals that we obtain in Theorem \ref{Thm:Lp-Lq} below.

It turns out that, in the corresponding coordinates, the eigenspaces $\mathcal{H}_n(M)$ of the Laplace--Beltrami operator on $M$ can be described in terms of products of spherical harmonics and Jacobi polynomials, see Section \ref{Section:Riemannian}. Moreover, the type parameters of the Jacobi polynomials therein depend on $j$, where $j$ varies in a range depending on $n$ and $d$. Hence, in order to handle the fractional integrals on $M$, it is needed the precise control of the dependence of the constants with respect to the parameters in Jacobi fractional integrals. The latter is done in our third main result, Theorem \ref{Thm:Jacobi kernel} below. Let us quickly introduce the notation. Given parameters $\a,\b>-1$, the trigonometric Jacobi  polynomials differential operator is given by
\begin{equation}
\label{eq:ecDiferencial Jacobi}
\mathcal{J}^{\a,\b}=-\frac{d^2}{d\theta^2}-\frac{\a-\b+(\a+\b+1) \cos\theta}{\sin\theta}\frac{d}{d\theta}+\left(\frac{\a+\b+1}{2}\right)^2.
\end{equation}
This is a symmetric operator on the interval $(0,\pi)$ equipped with the measure
\begin{equation}\label{medida omojenea}
d\mu_{\a,\b}(\theta)=(\sin\tfrac{\theta}{2})^{2\a+1}(\cos\tfrac{\theta}{2})^{2\b+1}d\theta.
\end{equation}
Its spectral resolution is given in terms of trigonometric Jacobi polynomials $\mathcal{P}_n^{(\a,\b)}(\theta)$, see \eqref{trig pol}. The fractional integral, denoted by $(\mathcal{J}^{\a,\b})^{-\sigma/2}$, is defined in the usual way \eqref{definicion obvia}, or equivalently, with a formula involving the Poisson semigroup generated by $\J^{\a,\b}$ as in \eqref{def con semigrupo}.

\begin{thm}[Sharp estimate for Jacobi fractional integral kernel]\label{Thm:Jacobi kernel}
Let $0<\s<1$, $\a\ge-1/2$ and $\b>-1/2$. Denote by $\K^{\a,\b}_\sigma(\t,\v)$, for $\t,\v\in(0,\pi)$, the kernel of the fractional integral $(\J^{\a,\b})^{-\s/2}$, see \eqref{Jacobi frac int kernel}. Then there exists a constant $C_\s$ that only depends on $\s$ and not on $\a$ and $\b$, such that
$$0\leq\K^{\a,\b}_\s(\t,\v)\le \frac{C_{\s}}{\left(\sin\frac{\t}{2}\sin\frac{\v}{2}\right)^{\a+1/2} \left(\cos\frac{\t}{2}\cos\frac{\v}{2}\right)^{\b+1/2}}\Bigg(\frac{1}{\b+1/2}+\frac{1}{|\sin\frac{\t-\v}{4}|^{1-\s}}\Bigg).$$
\end{thm}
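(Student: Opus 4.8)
The plan is to express the kernel $\K^{\a,\b}_\s(\t,\v)$ via the subordination formula coming from the Poisson semigroup representation \eqref{def con semigrupo}, namely
\[
\K^{\a,\b}_\s(\t,\v)=c_\s\int_0^\infty t^{\s-1}\,p_t^{\a,\b}(\t,\v)\,dt,
\]
where $p_t^{\a,\b}$ is the Poisson kernel for $\J^{\a,\b}$, and then to split the $t$-integral into a local part $t\in(0,1)$ and a global part $t\in(1,\infty)$. The crucial input is a pointwise upper bound for $p_t^{\a,\b}(\t,\v)$ with \emph{explicit} dependence on $\a,\b$. For this I would use the known product formula for Jacobi polynomials (Gasper's formula), which represents $p_t^{\a,\b}$ as an average of one-dimensional Poisson-type kernels against a nonnegative probability measure; this simultaneously yields nonnegativity, hence $\K^{\a,\b}_\s\ge0$, the left-hand inequality. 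The heat/Poisson kernel bounds should be organized so that the factor $(\sin\frac{\t}{2}\sin\frac{\v}{2})^{-\a-1/2}(\cos\frac{\t}{2}\cos\frac{\v}{2})^{-\b-1/2}$ is extracted cleanly — this is exactly the normalization that converts the $\mu_{\a,\b}$-measure into the flat measure on $(0,\pi)$ near the diagonal, so after this renormalization the remaining kernel behaves like a metric ball quantity in the distance $d(\t,\v)\sim|\sin\frac{\t-\v}{4}|$ (up to the behavior at the endpoint $\t$ or $\v$ near $\pi$, which is where the $\b$-dependence concentrates).

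For the global part $t\in(1,\infty)$ I would use spectral decay: since $\J^{\a,\b}$ has lowest eigenvalue $\big(\frac{\a+\b+1}{2}\big)^2>0$, the Poisson kernel decays exponentially in $t$, and the contribution is controlled by a constant times the $L^2(\mu_{\a,\b})$-normalization of the constant eigenfunction, which after renormalizing by the weight produces the term $\frac{1}{\b+1/2}$ (the factor $\frac{1}{\a+1/2}$ being absorbed because $\a\ge-1/2$ and the relevant normalization constant stays bounded there, but the $\b$-normalization genuinely blows up as $\b\to-1/2^+$). For the local part $t\in(0,1)$ I would use the near-diagonal Poisson kernel estimate, of the form
\[
p_t^{\a,\b}(\t,\v)\lesssim \frac{1}{(\sin\frac{\t}{2}\sin\frac{\v}{2})^{\a+1/2}(\cos\frac{\t}{2}\cos\frac{\v}{2})^{\b+1/2}}\cdot\frac{t}{\big(t^2+\sin^2\frac{\t-\v}{4}\big)^{?}},
\]
with the exponent chosen (using the product formula and a careful bookkeeping of the measure of balls) so that $\int_0^1 t^{\s-1}p_t^{\a,\b}\,dt$ yields $|\sin\frac{\t-\v}{4}|^{\s-1}$ after renormalization. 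The $\a,\b$-uniformity here is the heart of the matter: one must check that all constants arising from Gasper's product formula and from the resulting Beta-type integrals are bounded uniformly for $\a\ge-1/2$, $\b>-1/2$, with the \emph{only} unbounded quantity being the $1/(\b+1/2)$ coming from the measure normalization at $\t$ or $\v=\pi$.

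The main obstacle I anticipate is precisely obtaining the Poisson kernel bound with the correct \emph{uniform} constants and, in particular, tracking where the singular factor $1/(\b+1/2)$ must appear and showing it is the only such factor (and that no corresponding $1/(\a+1/2)$ is needed given $\a\ge-1/2$). Existing Jacobi heat- and Poisson-kernel estimates in the literature usually carry constants depending on $\a,\b$ in an unspecified way, so the technical work is to redo these estimates from the product formula while carrying the parameter dependence explicitly — handling separately the regime where $\t,\v$ are both away from $\pi$ (where the $\b$-part of the weight and $1/(\b+1/2)$ are harmless) and the regime where one of them approaches $\pi$ (where the $\cos$-weight degenerates and the $1/(\b+1/2)$ term is indispensable). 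A secondary technical point is the behavior at the other endpoint $\t$ or $\v$ near $0$ under the assumption $\a\ge-1/2$, which should be controlled without an extra singular constant precisely because the exponent $2\a+1\ge0$.
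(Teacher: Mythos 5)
Your outline follows the same route as the paper: subordination of the Poisson kernel, the (Dijksma--Koornwinder/Gasper-type) product formula writing the Poisson kernel as a positive average in the variable $z=u\sin\frac{\theta}{2}\sin\frac{\varphi}{2}+v\cos\frac{\theta}{2}\cos\frac{\varphi}{2}$ against $d\Pi_\alpha(u)\,d\Pi_\beta(v)$ (which indeed gives $\K^{\alpha,\beta}_\sigma\ge0$), and a split of the $t$-integral at $t=1$. But there is a genuine gap: the proposal stops exactly where the proof begins. You yourself defer the uniform-in-$(\alpha,\beta)$ kernel bounds as "the heart of the matter" and leave the local exponent as a question mark, so no estimate is actually established. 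The missing ingredient is the paper's Lemma \ref{lem:Sharp estimate}: a bound of $\int_{-1}^1(A-Bu)^{-(\gamma+\lambda+1)}\,d\Pi_\gamma(u)$ by explicit Gamma-factors times $B^{-(\gamma+1/2)}(A-B)^{-(\lambda+1/2)}$, whose constants cancel exactly against the normalization $\frac{\Gamma(\alpha+\beta+2)}{\Gamma(\alpha+1)\Gamma(\beta+1)}$ in \eqref{Poisson kernel}, together with the borderline case $\lambda=-1/2$, where one only gets the factor $\frac{1}{\gamma+1/2}+\log\frac{A}{A-B}$. Without this lemma (or an equivalent uniform estimate) none of the claimed Poisson bounds is justified, and this is precisely what the existing literature does not provide.

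Two specific points in your sketch would also not survive as stated. First, the term $\frac{1}{\beta+1/2}$ does not come from the $L^2(\mu_{\alpha,\beta})$-normalization of the constant eigenfunction: bounding the tail $t\ge1$ by the ground mode alone is not legitimate (higher modes are not dominated pointwise by the constant one near the endpoints, with constants uniform in $\alpha,\beta$). In the paper, integrating the explicit kernel over $t\ge1$ gives $\frac{2}{(\alpha+\beta+1)}(1-z)^{-(\alpha+\beta+1)}$, the $u$-integral via \eqref{1} extracts $(\sin\frac{\theta}{2}\sin\frac{\varphi}{2})^{-(\alpha+1/2)}$ and leaves a $v$-integral with the borderline exponent $\beta+1/2$, and it is this borderline Beta-type integral, i.e.\ the concentration of $d\Pi_\beta$ at $v=\pm1$ as $\beta\to-1/2^+$, that produces $\frac{1}{\beta+1/2}$, as in \eqref{b}. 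Moreover this same global piece produces a logarithm in $|\theta-\varphi|$, absent from your sketch, which must be absorbed into the singular term via $\log w\le\frac{2}{1-\sigma}\,w^{(1-\sigma)/2}$, yielding \eqref{a}; so the global part feeds both terms of the stated bound. Second, for the local part the correct outcome of the $t$-integration is $(1-z)^{-(\alpha+\beta+3/2-\sigma/2)}$ with a ratio of Gamma functions that cancels exactly in two further applications of \eqref{1} (the second with $\lambda=-\sigma/2$), giving $|\sin\frac{\theta-\varphi}{4}|^{\sigma-1}$ with a constant depending only on $\sigma$; this is the computation your unspecified exponent stands for, and it cannot be replaced by a generic "measure of balls" argument if the constants are to be independent of $\alpha$ and $\beta$.
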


Theorem \ref{Thm:Jacobi kernel} allows us to prove

\begin{thm}\label{Thm:Lp-Lq}
Let $0<\sigma<1$, $\a\geq-1/2$, $\b>-1/2$ and $0\leq a,b<\infty$. Define $$u_j(\t)=(\sin\tfrac{\t}{2})^{aj}(\cos\tfrac{\t}{2})^{bj},\quad\t\in(0,\pi),~j=0,1,\ldots.$$
Then there exists a constant $C$ depending only on $\sigma$, $\a$ and $\b$, such that
$$\Big\|\Big(\sum_{j=0}^\infty|u_j(\mathcal{J}^{\a+aj,\b+bj})^{-\s/2}(u_j^{-1}f_j)|^2\Big)^{1/2}\Big\|_{L^q((0,\pi),d\mu_{\a,\b})}\le C\Big\|\Big(\sum_{j=0}^\infty |f_j|^2\Big)^{1/2}\Big\|_{L^p((0,\pi),d\mu_{\a,\b})},$$
for all $f_j\in L^p((0,\pi),d\mu_{\a,\b})$, provided that the exponents $p,q$ satisfy
\begin{equation}\label{chu 1}
1\le \max\left\{\frac{2\a+2}{\a+3/2},\frac{2\b+2}{\b+3/2}\right\}<p\le q<\min\left\{\frac{2\a+2}{\a+1/2},\frac{2\b+2}{\b+1/2}\right\}\le\infty
\end{equation}
and
\begin{equation}\label{chu 2}
\frac{1}{q}\ge\frac{1}{p}-\min\left\{\frac{\sigma}{2\a+2},\frac{\sigma}{2\b+2}\right\}.
\end{equation}
\end{thm}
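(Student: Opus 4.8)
The plan is to deduce Theorem \ref{Thm:Lp-Lq} from the kernel bound in Theorem \ref{Thm:Jacobi kernel} by reducing the vector-valued $L^p(\ell^2)$--$L^q(\ell^2)$ inequality to scalar weighted estimates for a single, $j$-independent majorizing kernel. First I would write out the operator $T_jf_j:=u_j(\J^{\a+aj,\b+bj})^{-\s/2}(u_j^{-1}f_j)$ in integral form: its kernel is
\[
u_j(\t)\,\K^{\a+aj,\b+bj}_\s(\t,\v)\,u_j(\v)^{-1}\,\frac{d\mu_{\a+aj,\b+bj}(\v)}{d\mu_{\a,\b}(\v)},
\]
since we must integrate against $d\mu_{\a,\b}$ on the right-hand side. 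The Radon--Nikodym factor is exactly $(\sin\tfrac{\v}{2})^{2aj}(\cos\tfrac{\v}{2})^{2bj}=u_j(\v)^2$, so, using $u_j(\t)u_j(\v)=(\sin\tfrac{\t}{2}\sin\tfrac{\v}{2})^{aj}(\cos\tfrac{\t}{2}\cos\tfrac{\v}{2})^{bj}$, the factor $u_j(\v)^{-1}\cdot u_j(\v)^2\cdot u_j(\t)=u_j(\t)u_j(\v)$ times the bound from Theorem \ref{Thm:Jacobi kernel} applied with parameters $(\a+aj,\b+bj)$ produces
\[
0\le u_j(\t)\K^{\a+aj,\b+bj}_\s(\t,\v)u_j(\v)\le \frac{C_\s\,(\sin\tfrac{\t}{2}\sin\tfrac{\v}{2})^{aj-\a-1/2}(\cos\tfrac{\t}{2}\cos\tfrac{\v}{2})^{bj-\b-1/2}}{}\Bigl(\tfrac{1}{\b+bj+1/2}+\tfrac{1}{|\sin\frac{\t-\v}{4}|^{1-\s}}\Bigr).
\]
The crucial observation is that for $\t,\v\in(0,\pi)$ one has $\sin\tfrac{\t}{2}\sin\tfrac{\v}{2}\le 1$ and $\cos\tfrac{\t}{2}\cos\tfrac{\v}{2}\le 1$, so when $aj\ge\a+1/2$ and $bj\ge\b+1/2$ (i.e. for all large $j$) the powers with exponent $aj-\a-1/2\ge0$ are bounded by $1$, and for the finitely many small $j$ the whole operator is a single scalar operator handled separately. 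Likewise $\tfrac{1}{\b+bj+1/2}\le \tfrac{1}{\b+1/2}$. Hence the kernel of $T_j$ is dominated, uniformly in $j$, by the single $j$-independent kernel
\[
\mathcal{G}(\t,\v):=\frac{C_\s}{(\sin\tfrac{\t}{2}\sin\tfrac{\v}{2})^{\a+1/2}(\cos\tfrac{\t}{2}\cos\tfrac{\v}{2})^{\b+1/2}}\Bigl(\tfrac{1}{\b+1/2}+\tfrac{1}{|\sin\frac{\t-\v}{4}|^{1-\s}}\Bigr).
\]

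Next I would linearize the vector-valued norm: by the pointwise bound $|T_jf_j(\t)|\le \int_0^\pi \mathcal{G}(\t,\v)|f_j(\v)|\,d\mu_{\a,\b}(\v)=:Gf_j(\t)$ together with Minkowski's integral inequality (the $\ell^2$ norm of integrals is at most the integral of the $\ell^2$ norm), one gets
\[
\Bigl(\sum_j|T_jf_j(\t)|^2\Bigr)^{1/2}\le \int_0^\pi \mathcal{G}(\t,\v)\Bigl(\sum_j|f_j(\v)|^2\Bigr)^{1/2}d\mu_{\a,\b}(\v)=G\bigl(F\bigr)(\t),
\]
where $F=(\sum_j|f_j|^2)^{1/2}$. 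Thus the vector-valued estimate reduces to the scalar weighted inequality $\|GF\|_{L^q(d\mu_{\a,\b})}\le C\|F\|_{L^p(d\mu_{\a,\b})}$ for nonnegative $F$. This is where the bulk of the work lies: I would split $\mathcal{G}$ into the ``local'' part governed by $|\sin\frac{\t-\v}{4}|^{-(1-\s)}$ and the ``global'' bounded part governed by $\frac{1}{\b+1/2}$, each multiplied by the weight $(\sin\tfrac{\t}{2}\sin\tfrac{\v}{2})^{-(\a+1/2)}(\cos\tfrac{\t}{2}\cos\tfrac{\v}{2})^{-(\b+1/2)}$. The local part is, after the substitution relating $\mu_{\a,\b}$ to Lebesgue measure, a one-dimensional fractional integral of order $\s$ with a power weight of the form $(\sin\tfrac{\t}{2})^{a_1}(\cos\tfrac{\t}{2})^{b_1}$ on $\t$ and a dual power on $\v$; the $L^p$--$L^q$ boundedness of such weighted fractional integrals on $(0,\pi)$ with $d\mu_{\a,\b}$ is governed precisely by a Stein--Weiss / Hardy--Littlewood--Sobolev type condition, which I would verify translates into exactly the numerology \eqref{chu 1}--\eqref{chu 2}. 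Near the endpoints $\t\to0$ and $\t\to\pi$ the measure $d\mu_{\a,\b}$ behaves like $\t^{2\a+1}d\t$ and $(\pi-\t)^{2\b+1}d\t$ respectively, so the two ``min'' conditions in \eqref{chu 1}--\eqref{chu 2} arise from the two endpoints separately — one driven by the parameter $\a$, the other by $\b$ — and I would analyze each endpoint as a weighted fractional integral on a half-line model and invoke the classical Stein--Weiss theorem (or a direct Schur-test computation) on each piece. The global bounded part is comparatively easy: away from the diagonal $\mathcal G$ minus its singular piece is bounded by $C(\b+1/2)^{-1}$ times the endpoint weights, so it is an integral operator with a kernel that is a product of a function of $\t$ and a function of $\v$ (plus lower-order terms), and its $L^p(d\mu_{\a,\b})$--$L^q(d\mu_{\a,\b})$ boundedness follows from Hölder's inequality once one checks the relevant powers are integrable, which again reduces to \eqref{chu 1}.

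For the finitely many indices $j$ with $aj<\a+1/2$ or $bj<\b+1/2$, I would note there are at most $j_0:=\max\{(\a+1/2)/a,(\b+1/2)/b\}$ of them (interpreting $j_0=0$ if $a=0$ forces $\a\le-1/2$, hence $\a=-1/2$, and similarly for $b$), bound the corresponding $\ell^2$ sum by a finite sum of scalar norms $\sum_{j\le j_0}\|T_jf_j\|$, and treat each $T_j$ as a weighted fractional integral with fixed parameters by the same scalar estimates — here one uses Theorem \ref{Thm:Jacobi kernel} with the fixed parameters $(\a+aj,\b+bj)$, whose constant now harmlessly depends on $\a,\b,\s$. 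Assembling the large-$j$ contribution (controlled uniformly by $G$) with the finitely many small-$j$ contributions gives the claimed bound with a constant depending only on $\s,\a,\b$, completing the proof.

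The main obstacle I anticipate is the sharp scalar weighted $L^p$--$L^q$ estimate for $G$: one must show that the two-sided power-weight fractional integral on $(0,\pi)$ equipped with $d\mu_{\a,\b}$ is bounded exactly under \eqref{chu 1}--\eqref{chu 2}, handling the diagonal singularity and the two boundary singularities simultaneously. The cleanest route is probably a partition of $(0,\pi)\times(0,\pi)$ into a neighborhood of the diagonal (where a weighted Hardy--Littlewood--Sobolev inequality applies after freezing the weights, since on each dyadic annulus around the diagonal the weights are essentially constant) and its complement (where the kernel factors and Hölder suffices), together with separate dyadic decompositions near $\t=0$ and $\t=\pi$ to capture the $\a$- and $\b$-driven constraints; verifying that the resulting arithmetic of exponents matches \eqref{chu 1}--\eqref{chu 2} exactly, with no loss, is the delicate bookkeeping.
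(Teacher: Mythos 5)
Your overall strategy is essentially the paper's: dominate each $j$-dependent kernel by a single $j$-independent majorant using the parameter-explicit bound of Theorem \ref{Thm:Jacobi kernel}, and then reduce to a two-power-weight $L^p$--$L^q$ estimate for a fractional integral on $(0,\pi)$. Two comments on the first half. Your exponent arithmetic is off: applying Theorem \ref{Thm:Jacobi kernel} with parameters $(\a+aj,\b+bj)$ gives the factor $(\sin\frac{\t}{2}\sin\frac{\v}{2})^{-(\a+aj+1/2)}(\cos\frac{\t}{2}\cos\frac{\v}{2})^{-(\b+bj+1/2)}$, so after multiplying by $u_j(\t)u_j(\v)=(\sin\frac{\t}{2}\sin\frac{\v}{2})^{aj}(\cos\frac{\t}{2}\cos\frac{\v}{2})^{bj}$ the exponents are exactly $-(\a+1/2)$ and $-(\b+1/2)$, not $aj-\a-1/2$ and $bj-\b-1/2$. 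The cancellation is exact and uniform in $j$ (this is precisely what the sharpness in $\a,\b$ of Theorem \ref{Thm:Jacobi kernel} buys), so your large-$j$/small-$j$ split is unnecessary: together with $\frac{1}{\b+bj+1/2}\le\frac{1}{\b+1/2}$ and $|\sin\frac{\t-\v}{4}|\sim|\t-\v|$, the domination by your kernel $\mathcal{G}$ holds for every $j$. Your passage to the vector-valued inequality via positivity of $\mathcal{G}$ and Minkowski's integral inequality is a legitimate alternative to the paper's use of the Marcinkiewicz--Zygmund $\ell^2$-extension (Lemma \ref{Lem:Marcin}); both yield the same reduction to a scalar weighted bound.

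The substantive issue is the second half: the scalar estimate $\|\mathcal{I}_\s g\|_{L^q((0,\pi),w\,d\t)}\le C\|g\|_{L^p((0,\pi),v\,d\t)}$ with $w,v$ as in \eqref{peso w}--\eqref{peso v} is where the hypotheses \eqref{chu 1}--\eqref{chu 2} actually enter, and in your proposal it is only announced (``Stein--Weiss near each endpoint, H\"older off the diagonal, delicate bookkeeping deferred''), so as written there is a gap at the heart of the argument. The route you indicate is viable: near $\t=0$ the weights behave like $\t^{(2\a+1)(1-q/2)}$ and $\t^{(2\a+1)(1-p/2)}$, and the one-dimensional Stein--Weiss conditions do reproduce exactly $p>\frac{2\a+2}{\a+3/2}$, $q<\frac{2\a+2}{\a+1/2}$ and $\frac1q\ge\frac1p-\frac{\s}{2\a+2}$ (and the $\b$-analogues at $\t=\pi$), while the off-diagonal and $\frac{1}{\b+1/2}$ pieces factor and are handled by H\"older under \eqref{chu 1}; but this verification must be carried out, since it is the entire quantitative content. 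The paper proceeds differently and more compactly: it invokes the two-weight characterization of Bernardis--Salinas (Theorem \ref{Thm:BS}), checking $w\in A_\infty$, $\bar v=v^{-1/(p-1)}\in A_\infty$ and the testing condition \eqref{eq3} on all intervals by means of the elementary Lemmas \ref{Lem:Mario Bros} and \ref{Lem:Super Mario Bros}, which is where \eqref{chu 1}--\eqref{chu 2} are used. So: same reduction as the paper, a harmless but real arithmetic slip plus a superfluous case split, and the key weighted inequality left unproved, for which you propose a different (Stein--Weiss/dyadic) route than the paper's two-weight testing-condition argument.
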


A notion of fractional integration for ultraspherical expansions (which corresponds to $\a=\b=\lambda-1/2$ in the Jacobi case) was first introduced and analyzed by B. Muckenhoupt and E. M. Stein in the seminal work \cite{Muckenhoupt-Stein}. Our definition is slightly different from theirs because we use the multiplier $(n+\lambda)^{-\s}$ while they consider $n^{-\s}$. On the other hand, weighted inequalities for fractional integrals with respect to certain Jacobi expansions were studied by B. Muckenhoupt in \cite{Muckenhoupt}. His results are not enough for our purposes here because in \cite{Muckenhoupt} there is no precise control on the constants with respect to $\a$ and $\b$. We point out that in \cite{Nowak-Sjogren Calderon} A. Nowak and P. Sj\"ogren considered several Calder\'on--Zygmund operators related to the operator \eqref{eq:ecDiferencial Jacobi}. The fractional integral was not analyzed there. Moreover, it is not clear from the estimates in \cite{Nowak-Sjogren Calderon} how to track down the exact dependence of the constants on $\a$ and $\b$.

The paper is organized as follows. In Section \ref{Section:Kernel} we introduce the Jacobi fractional integral and give the proofs of Theorem \ref{Thm:Jacobi kernel} and Theorem \ref{Thm:Lp-Lq}. The results on fractional integrals for compact Riemannian symmetric spaces of rank one are proved in Section \ref{Section:Riemannian}.

\section{Fractional integrals for Jacobi expansions}\label{Section:Kernel}

The standard Jacobi polynomials of degree $n\geq0$ and type $\a,\b>-1$ are given by
\begin{equation}\label{eq1}
P_n^{(\a,\b)}(x)=(1-x)^{-\a}(1+x)^{-\b}\frac{(-1)^n}{2^nn!}\left(\frac{d}{dx}\right)^n\set{(1-x)^{n+\a}(1+x)^{n+\b}},\quad x\in(-1,1),
\end{equation}
see \cite[(4.3.1)]{Szego}. These form an orthogonal basis of $L^2((-1,1),(1-x)^\a(1+x)^\b dx)$. After making the change of variable $x=\cos\theta$, we obtain the normalized Jacobi trigonometric polynomials
\begin{equation}\label{trig pol}
\mathcal{P}_n^{(\a,\b)}(\theta)=d_n^{\a,\b}P_n^{(\a,\b)}(\cos\theta),
\end{equation}
where the normalizing factor is
\begin{equation}\label{que}
\begin{aligned}
    d_n^{\a,\b} &= 2^{\frac{\a+\b+1}{2}}\|P_n^{(\a,\b)}\|_{L^2((-1,1),(1-x)^\a(1+x)^\b dx)}^{-1} \\
     &= \left(\frac{(2n+\a+\b+1)\Gamma(n+1)\Gamma(n+\a+\b+1)}{\Gamma(n+\a+1)\Gamma(n+\b+1)}\right)^{1/2}.
\end{aligned}
\end{equation}
The trigonometric polynomials $\P_n^{(\a,\b)}$ are eigenfunctions of the differential operator \eqref{eq:ecDiferencial Jacobi}. Indeed, we have $\J^{\a,\b}\P_n^{(\a,\b)}=\lambda_n^{\a,\b}\P_n^{(\a,\b)}$, with eigenvalue
$$\lambda_n^{\a,\b}=\left(n+\tfrac{\a+\b+1}{2}\right)^2.$$
Moreover, the system $\{\mathcal{P}_n^{(\a,\b)}\}_{n\ge0}$ forms a complete orthonormal basis of $$L^2(d\mu_{\a,\b}):=L^2((0,\pi),d\mu_{\a,\b}(\t)),$$
with $d\mu_{\a,\b}$ as in \eqref{medida omojenea}. For further references about Jacobi polynomials, see \cite[Chapter IV]{Szego}.

\subsection{The Jacobi-Poisson kernel}

The Poisson semigroup related to $\J^{\a,\b}$ is initially defined in $L^2(d\mu_{\a,\b})$ as
$$\P_t^{\a,\b}f(\t)=\sum_{n=0}^\infty e^{-t\left|n+\frac{\a+\b+1}{2}\right|}c_n^{\a,\b}(f)\mathcal{P}_n^{(\a,\b)}(\theta),\quad t>0,$$
where the Fourier-Jacobi coefficient is
$$c_n^{\a,\b}(f)=\int_0^{\pi}f(\t)\mathcal{P}_n^{(\a,\b)}(\t)\,d\mu_{\a,\b}(\t).$$
We can write the Poisson semigroup $\{\P_t^{\a,\b}\}_{t>0}$ as an integral operator
$$\P_t^{\a,\b}f(\t)=\int_0^{\pi}\P_t^{\a,\b}(\t,\v)f(\v)\,d\mu_{\a,\b}(\v).$$
The Jacobi-Poisson kernel is given by
\begin{equation}\label{serie}
\P_t^{\a,\b}(\t,\v)=\sum_{n=0}^{\infty}e^{-t\left|n+\frac{\a+\b+1}{2}\right|}\mathcal{P}_n^{(\a,\b)}(\theta)\mathcal{P}_n^{(\a,\b)}(\v).
\end{equation}
We will need a more or less explicit expression of this kernel when $\a,\b\geq-1/2$. The formula for it was obtained in \cite{Nowak-Sjogren Calderon}. For the sake of completeness we sketch the computations here. Let
\begin{equation}\label{zeta}
z\equiv z(u,v,\t,\v):=u\sin\tfrac{\t}{2}\sin\tfrac{\v}{2}+v\cos\tfrac{\t}{2}\cos\tfrac{\v}{2}, \quad \t,\v\in(0,\pi),~u,v\in[-1,1].
\end{equation}
Consider on $[-1,1]$ the measure
\begin{equation}\label{medida}
d\Pi_\a(u)=\frac{\Gamma(\a+1)}{\sqrt{\pi}\Gamma(\a+1/2)}(1-u^2)^{\a-1/2}\,du,\quad\a>-1/2,
\end{equation}
(the analogous definition for $d\Pi_\b(v)$). In the limit case $\a=-1/2$, we put $\Pi_{-1/2}=\frac12(\delta_{-1}+\delta_1)$. The expression for the Jacobi-Poisson kernel can be achieved by using the product formula for Jacobi polynomials found in \cite[Theorem~(2.2)]{Dijksma-Koornwinder}:
$$\mathcal{P}_n^{(\a,\b)}(\t)\mathcal{P}_n^{(\a,\b)}(\v)= \frac{(2n+\a+\b+1)\Gamma(\a+\b+1)}{\Gamma(\a+1)\Gamma(\b+1)}\int_{-1}^1 \int_{-1}^1C_{2n}^{\a+\b+1}(z(u,v,\t,\v))\,d\Pi_\a(u)\,d\Pi_\b(v),$$
valid for $\a,\b>-1/2$. Here,
\begin{equation}\label{Gegenbauer}
C_k^\lambda(x)=\frac{\Gamma(\lambda+1/2)\Gamma(k+2\lambda)}{\Gamma(2\lambda)\Gamma(k+\lambda+1/2)}P_k^{(\lambda-1/2,\lambda-1/2)}(x)
\end{equation}
are the Gegenbauer polynomials of degree $k$ and parameter $\lambda>-1/2$. Applying the product formula for Jacobi polynomials to \eqref{serie}, we are reduced to compute
\begin{equation}\label{sum}
\sum_{n=0}^\infty (2n+\a+\b+1)e^{-\frac{t}{2}(2n+\a+\b+1)}C_{2n}^{\a+\b+1}(z).
\end{equation}
The generating formula for Gegenbauer polynomials gives
$$\sum_{k=0}^\infty\frac{k+\lambda}{\lambda}C_k^\lambda(z)r^k=\frac{1-r^2}{(1-2rz+r^2)^{\lambda+1}},\quad\abs{r}<1,~\lambda>0.$$
Let $\lambda=\a+\b+1$ and $r=e^{-t/2}$ in the identity above. Since the Gegenbauer polynomials of odd degree are odd functions and those of even degree are even functions, if we take the sum above only for $k$ even then we get the even part of the right-hand side. Hence, the sum in \eqref{sum} equals to
$$\frac{(\a+\b+1)e^{-t\frac{\a+\b+1}{2}}(1-e^{-t})}{2(1-2e^{-t/2}z+e^{-t})^{\a+\b+2}} +\frac{(\a+\b+1)e^{-t\frac{\a+\b+1}{2}}(1-e^{-t})}{2(1+2e^{-t/2}z+e^{-t})^{\a+\b+2}}.$$
Therefore, if we collect all the terms, take into account that $d\Pi_\a(-u)=d\Pi_\a(u)$ and complete the squares in the denominator of the expression above, then we get the desired formula:
\begin{equation}\label{Poisson kernel}
\begin{aligned}
    \P_t^{\a,\b}(\t,\v) &= \frac{\Gamma(\a+\b+2)}{\Gamma(\a+1)\Gamma(\b+1)}\\
    &\quad\times\int_{-1}^1\int_{-1}^1\frac{e^{-t\frac{\a+\b+1}{2}}(1-e^{-t})} {\left((1-e^{-t/2})^2+2e^{-t/2}(1-z(u,v,\t,\v))\right)^{\a+\b+2}}\,d\Pi_\a(u)\,d\Pi_\b(v).
\end{aligned}
\end{equation}
By continuity arguments, the representation for the Jacobi-Poisson kernel in \eqref{Poisson kernel} remains valid in the limiting cases $\a=-1/2$ or $\b=-1/2$.

\subsection{The kernel of the Jacobi fractional integral and proof of Theorem \ref{Thm:Jacobi kernel}}

Given $\s>0$, the Jacobi fractional integral is defined in $L^2(d\mu_{\a,\b})$ as
\begin{equation}\label{definicion obvia}
(\mathcal{J}^{\a,\b})^{-\s/2}f(\theta)=\sum_{n=0}^\infty\frac{1}{\left(n+\frac{\a+\b+1}{2}\right)^{\s}}\,c_n^{\a,\b}(f)\P_n^{(\a,\b)}(\theta).
\end{equation}
It is easy to check that, for $0<\s<1$, the following expression involving the Poisson semigroup gives an equivalent definition:
\begin{equation}\label{def con semigrupo}
(\mathcal{J}^{\a,\b})^{-\s/2}f(\theta)=\frac{1}{\Gamma(\s)}\int_0^\infty\P_t^{\a,\b}f(\theta)\,\frac{dt}{t^{1-\s}}= \int_{0}^{\pi}\K^{\a,\b}_\s(\t,\v)f(\varphi)\,d\mu_{\a,\b}(\varphi),
\end{equation}
where, for $\P_t^{\a,\b}(\t,\v)$ as in \eqref{Poisson kernel},
\begin{equation}\label{Jacobi frac int kernel}
\K^{\a,\b}_\s(\t,\v)=\frac{1}{\Gamma(\s)}\int_0^\infty\P_t^{\a,\b}(\theta,\varphi)\,\frac{dt}{t^{1-\s}}.
\end{equation}
The fractional integral $(\mathcal{J}^{\a,\b})^{-\s/2}$ can also be seen as a Laplace transform type multiplier. For general theory on Laplace transform multipliers in compact Lie groups see Stein \cite[Chapter~II]{Stein}.

In order to prove Theorem \ref{Thm:Jacobi kernel} we need the following auxiliary Lemma.

\begin{lem}\label{lem:Sharp estimate}
Let $\gamma\geq-1/2$, $\lambda>-1/2$, $A>B>0$, and $d\Pi_\gamma(u)$ be as in \eqref{medida}. Then
\begin{equation}\label{1}
\int_{-1}^1\frac{d\Pi_\gamma(u)}{(A-Bu)^{\gamma+\lambda+1}}\leq\frac{2^{\gamma+1/2}\Gamma(\gamma+1)\Gamma(\lambda+1/2)} {\sqrt{\pi}\Gamma(\gamma+\lambda+1)}\,\frac{1}{B^{\gamma+1/2}(A-B)^{\lambda+1/2}}.
\end{equation}
Moreover, for $\gamma>-1/2$ and $\lambda=-1/2$, we have
\begin{equation}\label{2}
\int_{-1}^1\frac{d\Pi_{\gamma}(u)}{(A-Bu)^{\gamma+1/2}}\le\frac{2^{\gamma+1/2}\Gamma(\gamma+1)} {\sqrt{\pi}\Gamma(\gamma+1/2)}\left[\frac{1}{\gamma+1/2}+\log\left(\frac{A}{A-B}\right)\right].
\end{equation}
Note that for the case $\gamma,\lambda=-1/2$, the integral boils down to
$\displaystyle\int_{-1}^1d\Pi_{-1/2}(u)=1$.
\end{lem}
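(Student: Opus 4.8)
The plan is to directly estimate the integral $\int_{-1}^1 (A-Bu)^{-(\gamma+\lambda+1)}\,d\Pi_\gamma(u)$ by a substitution that turns it into a Beta-type integral and then bound the resulting expression. Writing out $d\Pi_\gamma$ explicitly, we need
\begin{equation*}
\frac{\Gamma(\gamma+1)}{\sqrt{\pi}\,\Gamma(\gamma+1/2)}\int_{-1}^1\frac{(1-u^2)^{\gamma-1/2}}{(A-Bu)^{\gamma+\lambda+1}}\,du.
\end{equation*}
The natural move is to substitute $u = 1-2s$, $s\in[0,1]$, so that $1-u^2 = 4s(1-s)$ and $A-Bu = (A-B) + 2Bs$. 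This recasts the integral (up to constants) as $\int_0^1 s^{\gamma-1/2}(1-s)^{\gamma-1/2}\big((A-B)+2Bs\big)^{-(\gamma+\lambda+1)}\,ds$. The key observation is that $(1-s)^{\gamma-1/2}\le 1$ on $[0,1]$ (since $\gamma\ge-1/2$ forces the exponent $\ge0$ when $\gamma\ge1/2$; for $-1/2\le\gamma<1/2$ one instead needs to be slightly more careful — see below), and that dropping this factor leaves an integral that can be computed in closed form.

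**Next I would** evaluate the simplified integral. After discarding $(1-s)^{\gamma-1/2}$ we face $\int_0^1 s^{\gamma-1/2}\big((A-B)+2Bs\big)^{-(\gamma+\lambda+1)}\,ds$; extending the range to $[0,\infty)$ only increases it, and the substitution $2Bs = (A-B)w$ turns it into
\begin{equation*}
\frac{1}{(2B)^{\gamma+1/2}(A-B)^{\lambda+1/2}}\int_0^\infty\frac{w^{\gamma-1/2}}{(1+w)^{\gamma+\lambda+1}}\,dw = \frac{1}{(2B)^{\gamma+1/2}(A-B)^{\lambda+1/2}}\cdot B(\gamma+1/2,\lambda+1/2),
\end{equation*}
where $B(\cdot,\cdot)$ is the Beta function, convergent precisely because $\gamma+1/2>0$ and $\lambda+1/2>0$. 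Assembling the constants — $\Gamma(\gamma+1)/(\sqrt\pi\,\Gamma(\gamma+1/2))$ times $2^{\gamma-1/2}$ from $4^{\gamma-1/2}\cdot 2$ in the Jacobian, times $1/(2B)^{\gamma+1/2} = 2^{-\gamma-1/2}B^{-\gamma-1/2}$, times $\Gamma(\gamma+1/2)\Gamma(\lambda+1/2)/\Gamma(\gamma+\lambda+1)$ — gives exactly the claimed bound $\frac{2^{\gamma+1/2}\Gamma(\gamma+1)\Gamma(\lambda+1/2)}{\sqrt\pi\,\Gamma(\gamma+\lambda+1)}\cdot\frac{1}{B^{\gamma+1/2}(A-B)^{\lambda+1/2}}$. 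For the endpoint $\gamma=-1/2$ where $\Pi_{-1/2}=\frac12(\delta_{-1}+\delta_1)$, one checks directly that $\frac12\big[(A+B)^{-\lambda-1/2}+(A-B)^{-\lambda-1/2}\big]\le (A-B)^{-\lambda-1/2}$, matching the inequality with the constant $\frac{2^0\Gamma(1/2)\Gamma(\lambda+1/2)}{\sqrt\pi\,\Gamma(\lambda+1/2)}=1$.

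**For the second inequality** \eqref{2} with $\lambda=-1/2$ the same substitution gives $\int_0^1 s^{\gamma-1/2}(1-s)^{\gamma-1/2}\big((A-B)+2Bs\big)^{-(\gamma+1/2)}\,ds$, but now the Beta-type integral after dropping $(1-s)^{\gamma-1/2}$ diverges logarithmically at $\infty$, so the range cannot be extended. Instead I split $\int_0^1 = \int_0^{(A-B)/(2B)} + \int_{(A-B)/(2B)}^1$ (truncating the first range at $1$ if necessary). On the first piece $(A-B)+2Bs\ge A-B$, giving a contribution $\le (A-B)^{-(\gamma+1/2)}\int_0^1 s^{\gamma-1/2}\,ds = \frac{1}{\gamma+1/2}(A-B)^{-(\gamma+1/2)}$. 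On the second piece $(A-B)+2Bs\ge 2Bs$ and $s^{\gamma-1/2}\le s^{-1}\cdot s^{\gamma+1/2}$... more simply, bound $s^{\gamma-1/2}(2Bs)^{-(\gamma+1/2)} = (2B)^{-(\gamma+1/2)}s^{-1}$ and integrate $s^{-1}$ from $(A-B)/(2B)$ to $1$, obtaining $(2B)^{-(\gamma+1/2)}\log\!\big(\frac{2B}{A-B}\big)$; using $2B\le A+B\le 2A$ replaces this by the cleaner $\log\!\big(\frac{A}{A-B}\big)$ form (up to adjusting constants into the prefactor, or one simply notes $(2B)^{-\gamma-1/2}$ vs $(A-B)^{-\gamma-1/2}$ and $(A-B)^{-\gamma-1/2}\log(A/(A-B))$ dominates). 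Bookkeeping the constant $2^{\gamma+1/2}\Gamma(\gamma+1)/(\sqrt\pi\,\Gamma(\gamma+1/2))$ as before yields \eqref{2}.

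**The main obstacle** I anticipate is handling the factor $(1-s)^{\gamma-1/2}$ cleanly when $-1/2<\gamma<1/2$, since then the exponent is negative and $(1-s)^{\gamma-1/2}$ blows up at $s=1$ rather than being bounded by $1$. The fix is that this singularity is harmless: near $s=1$ the other factor $\big((A-B)+2Bs\big)^{-(\gamma+\lambda+1)}$ is bounded (by $(A-B)^{-(\gamma+\lambda+1)}$, since the base is increasing in $s$), so one can symmetrize — exploit that the integrand in the original $u$-variable is integrable on all of $[-1,1]$ — or split the integral at $s=1/2$, bounding $(1-s)^{\gamma-1/2}\le (1/2)^{\gamma-1/2}$ on $[0,1/2]$ and on $[1/2,1]$ using $s^{\gamma-1/2}\le(1/2)^{\gamma-1/2}$ together with $\int_{1/2}^1(1-s)^{\gamma-1/2}\,ds<\infty$. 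In either case the resulting constant is of the same shape, and a slightly generous version of the stated inequality follows; getting the constant to come out \emph{exactly} as written requires resisting the temptation to discard $(1-s)^{\gamma-1/2}$ and instead recognizing the full integral as the Euler integral for the Beta function (equivalently, a ${}_2F_1$ evaluated via the integral representation), which reproduces $B(\gamma+1/2,\lambda+1/2)$ with no loss.
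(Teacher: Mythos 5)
Your main computation---substituting $u=1-2s$, discarding the factor $(1-s)^{\gamma-1/2}$, and recognizing the remaining integral, extended to $[0,\infty)$, as a Beta integral---is essentially the paper's own argument in different coordinates (the paper first reduces to $u\in[0,1]$ and discards $(1+u)^{\gamma-1/2}$), and for $\gamma\ge 1/2$ it is correct; done carefully your bookkeeping gives the constant $2^{\gamma-1/2}$ rather than $2^{\gamma+1/2}$ (note $4^{\gamma-1/2}\cdot 2=2^{2\gamma}$ and $2^{2\gamma}\cdot 2^{-\gamma-1/2}=2^{\gamma-1/2}$), which is smaller than claimed and hence harmless. The genuine gap is the range $-1/2<\gamma<1/2$, which is not a marginal case (it occurs in the application, e.g.\ as $\gamma=\a=\tfrac{d-2}{2}=0$ for $\S^2$). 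Your proposed repair bounds the kernel on $s\in[1/2,1]$ by $(A-B)^{-(\gamma+\lambda+1)}$, and a term of that size is not ``a slightly generous version'' of \eqref{1}: when $A-B\ll B$ it is not dominated by any constant times $B^{-(\gamma+1/2)}(A-B)^{-(\lambda+1/2)}$, and splitting the exponent between $B$ and $A-B$ is precisely the content of the lemma (in its application $A-B=1-z$ is exactly the small quantity). Even after the easy correction (on $[1/2,1]$ the base is $\ge A\ge\max\{B,A-B\}$), the split-at-$1/2$ route produces an extra additive term of order $\Gamma(\gamma+1)/\Gamma(\gamma+3/2)$ that does not carry the factor $\Gamma(\lambda+1/2)/\Gamma(\gamma+\lambda+1)$, so the constant is not of the stated form; since $\gamma,\lambda$ stand for the unbounded Jacobi parameters, uniformity of the precise constant is what the lemma exists to provide, and your closing appeal to Euler's integral/${}_2F_1$ is an assertion, not an estimate. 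The clean fix---which you mention (``symmetrize'') but do not carry out---is the paper's first step: $\int_{-1}^{1}\le 2\int_0^1$ because $A+Bu\ge A-Bu$ for $u\in[0,1]$ and $\gamma+\lambda+1>0$; after that, $(1+u)^{\gamma-1/2}\le\max\{1,2^{\gamma-1/2}\}$ on $[0,1]$ and your Beta computation applies verbatim for every $\gamma\ge-1/2$.

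For \eqref{2} your split at $X=(A-B)/(2B)$ is the right idea, but on the first piece you enlarge $\int_0^{X}s^{\gamma-1/2}\,ds$ to $\int_0^1 s^{\gamma-1/2}\,ds$ and obtain $(A-B)^{-(\gamma+1/2)}/(\gamma+1/2)$; keeping the upper limit $X$ gives $(2B)^{-(\gamma+1/2)}/(\gamma+1/2)$, which is the correct shape. With $(A-B)^{-(\gamma+1/2)}$ in place of $B^{-(\gamma+1/2)}$ the resulting bound is strictly weaker when $A-B\ll B$ and implies neither \eqref{2} nor what is needed downstream. (Be aware that \eqref{2} as printed appears to be missing a factor $B^{-(\gamma+1/2)}$: the paper's proof, via the substitution $1-u=\tfrac{A-B}{B}s$ and integration by parts, yields $\tfrac{2^{\gamma+1/2}\Gamma(\gamma+1)}{\sqrt{\pi}\,\Gamma(\gamma+1/2)}\,B^{-(\gamma+1/2)}\bigl[\tfrac{1}{\gamma+1/2}+\log\tfrac{A}{A-B}\bigr]$, and it is this version that is used in \eqref{b}; as literally stated, \eqref{2} fails, e.g., for $A=2B$, $B\to0$.) So the correct target is the version with $B^{-(\gamma+1/2)}$, and your argument reaches it once the upper limit of the first piece is fixed, the extra $\log 2$ from $\log\tfrac{2B}{A-B}\le \log 2+\log\tfrac{A}{A-B}$ is accounted for, and the $(1-s)^{\gamma-1/2}$ factor for $-1/2<\gamma<1/2$ is handled by the symmetrization described above.
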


\begin{proof}
We have
\begin{align*}
     \int_{-1}^1\frac{d\Pi_\gamma(u)}{(A-Bu)^{\gamma+\lambda+1}}&\leq\frac{2\Gamma(\gamma+1)}{\sqrt{\pi}\Gamma(\gamma+1/2)} \int_0^1\frac{(1-u^2)^{\gamma-1/2}}{(A-Bu)^{\gamma+\lambda+1}}\,du \\
     &\leq \frac{2^{\gamma+1/2}\Gamma(\gamma+1)}{\sqrt{\pi}\Gamma(\gamma+1/2)} \int_0^1\frac{(1-u)^{\gamma-1/2}}{(A-Bu)^{\gamma+\lambda+1}}\,du \\
     &=\frac{2^{\gamma+1/2}\Gamma(\gamma+1)} {\sqrt{\pi}\Gamma(\gamma+1/2)}\int_0^1\frac{t^{\gamma-1/2}}{(A-B(1-t))^{\gamma+\lambda+1}}\,dt \\
     &= \frac{2^{\gamma+1/2}\Gamma(\gamma+1)}{\sqrt{\pi}\Gamma(\gamma+1/2)}\,\frac{1}{(A-B)^{\gamma+\lambda+1}} \int_0^1\frac{t^{\gamma-1/2}}{(1+\frac{B}{A-B}t)^{\gamma+\lambda+1}}\,dt \\
     &= \frac{2^{\gamma+1/2}\Gamma(\gamma+1)}{\sqrt{\pi}\Gamma(\gamma+1/2)}\,\frac{1}{(A-B)^{\gamma+\lambda+1}} \left(\frac{A-B}{B}\right)^{\gamma+1/2}\int_0^{\frac{B}{A-B}}\frac{s^{\gamma-1/2}}{(1+s)^{\gamma+\lambda+1}}ds \\
     &\leq \frac{2^{\gamma+1/2}\Gamma(\gamma+1)}{\sqrt{\pi}\Gamma(\gamma+1/2)}\,\frac{1}{B^{\gamma+1/2}(A-B)^{\lambda+1/2}}\, \frac{\Gamma(\gamma+1/2)\Gamma(\lambda+1/2)}{\Gamma(\gamma+\lambda+1)} \\
     &=\frac{2^{\gamma+1/2}\Gamma(\gamma+1)\Gamma(\lambda+1/2)} {\sqrt{\pi}\Gamma(\gamma+\lambda+1)}\,\frac{1}{B^{\gamma+1/2}(A-B)^{\lambda+1/2}}.
\end{align*}
For the case $\gamma>-1/2$, $\lambda=-1/2$, by using the change of variable $1-u=\frac{A-B}{B}s$ and integration by parts,
\begin{align*}
    \int_{-1}^1\frac{d\Pi_\gamma(u)}{(A-Bu)^{\gamma+1/2}} &\le \frac{2^{\gamma+1/2}\Gamma(\gamma+1)}{\sqrt{\pi}\Gamma(\gamma+1/2)}\int_{0}^1\frac{(1-u)^{\gamma-1/2}}{(A-Bu)^{\gamma+1/2}}\,du \\
    &= \frac{2^{\gamma+1/2}\Gamma(\gamma+1)}{\sqrt{\pi}\Gamma(\gamma+1/2)B^{\gamma+1/2}} \int_{0}^{\frac{B}{A-B}}\frac{s^{\gamma-1/2}}{(1+s)^{\gamma+1/2}}\,ds\\
    &\le\frac{2^{\gamma+1/2}\Gamma(\gamma+1)}{\sqrt{\pi}\Gamma(\gamma+1/2)B^{\gamma+1/2}} \left[\left.\frac{s^{\gamma+1/2}}{(\gamma+1/2)(1+s)^{\gamma+1/2}}\right|^{\infty}_{0} +\int_{0}^{\frac{B}{A-B}}\frac{s^{\gamma+1/2}}{(1+s)^{\gamma+3/2}}\,ds\right]\\
    &\le\frac{2^{\gamma+1/2}\Gamma(\gamma+1)}{\sqrt{\pi}\Gamma(\gamma+1/2)B^{\gamma+1/2}} \left[\frac{1}{\gamma+1/2}+\log\left(\frac{A}{A-B}\right)\right].
\end{align*}
\end{proof}

\begin{proof}[Proof of Theorem \ref{Thm:Jacobi kernel}]
Taking into account the expression for the Jacobi fractional integral kernel \eqref{Jacobi frac int kernel} and the formula for the Poisson kernel \eqref{Poisson kernel}, after applying Fubini's Theorem, we get
\begin{align*}
    &\K_\s^{\a,\b}(\t,\v) \\
     & =\frac{\Gamma(\a+\b+2)}{\Gamma(\s)\Gamma(\a+1)\Gamma(\b+1)}
     \int_{-1}^1\int_{-1}^1\int_0^{\infty}\frac{e^{-t\frac{\a+\b+1}{2}}(1-e^{-t})} {\left((1-e^{-t/2})^2+2e^{-t/2}(1-z)\right)^{\a+\b+2}}\frac{dt}{t^{1-\s}}d\Pi_\a(u)d\Pi_\b(v)\\
     &=\frac{\Gamma(\a+\b+2)}{\Gamma(\s)2^{\a+\b+1}\Gamma(\a+1)\Gamma(\b+1)}\int_{-1}^1\int_{-1}^1\int_0^\infty  \frac{\sinh\frac{t}{2}}{\left(\cosh\frac{t}{2}-1+(1-z)\right)^{\a+\b+2}}\frac{dt}{t^{1-\s}}d\Pi_\a(u)d\Pi_\b(v),
\end{align*}
where $z=z(u,v,\t,\v)$ is as in \eqref{zeta}. Split the integral in $t$ into two parts:
\[
\int_0^\infty\frac{\sinh\frac{t}{2}}{\left(\cosh\frac{t}{2}-1+(1-z)\right)^{\a+\b+2}}\frac{dt}{t^{1-\s}}=\int_0^1~+\int_1^\infty=:J_1+J_2.
\]

Let us begin with $J_2$. Since $1-\s>0$,
\begin{align*}
    J_2 &\le \int_1^\infty\frac{\sinh\frac{t}{2}}{\left(\cosh\frac{t}{2}-1+(1-z)\right)^{\a+\b+2}}\,dt = \frac{2}{(\a+\b+1)\left(\cosh\frac{1}{2}-1+(1-z)\right)^{\a+\b+1}}\\
    &\le \frac{2}{(\a+\b+1)\left(1-z\right)^{\a+\b+1}}.
\end{align*}
Then,
$$\int_1^\infty\P_t^{\a,\b}(\theta,\varphi)\frac{dt}{t^{1-\s}}\le\frac{2\Gamma(\a+\b+1)}{2^{\a+\b+1} \Gamma(\a+1)\Gamma(\b+1)}\int_{-1}^1\int_{-1}^1\frac{d\Pi_\a(u)\,d\Pi_\b(v)} {\left(1-u\sin\frac{\t}{2}\sin\frac{\v}{2}-v\cos\frac{\t}{2}\cos\frac{\v}{2}\right)^{\a+\b+1}}.$$
Applying \eqref{1} of Lemma \ref{lem:Sharp estimate} to the integral against $d\Pi_\a(u)$ with $\gamma=\a\ge-1/2$, $\lambda=\b>-1/2$, $A=1-v\cos\frac{\t}{2}\cos\frac{\v}{2}$ and $B=\sin\frac{\t}{2}\sin\frac{\v}{2}$, we see that the expression above is bounded by
$$\frac{2\Gamma(\b+1/2)}{2^{\b+1/2}\Gamma(\b+1)\sqrt{\pi}\left(\sin\frac{\t}{2}\sin\frac{\v}{2}\right)^{\a+1/2}} \int_{-1}^1\frac{d\Pi_\b(v)}{\left(1-\sin\frac{\t}{2}\sin\frac{\v}{2}-v\cos\frac{\t}{2}\cos\frac{\v}{2}\right)^{\b+1/2}}.$$
Next we use \eqref{2} of Lemma \ref{lem:Sharp estimate} with $\gamma=\b>-1/2$ to bound the latter by
\begin{equation}\label{b}
\frac{2}{\pi\left(\sin\frac{\t}{2}\sin\frac{\v}{2}\right)^{\a+1/2}\left(\cos\frac{\t}{2}\cos\frac{\v}{2}\right)^{\b+1/2}}\left[\frac{1}{\b+1/2}+ \log\left(\frac{1-\sin\frac{\t}{2}\sin\frac{\v}{2}}{1-\sin\frac{\t}{2}\sin\frac{\v}{2}-\cos\frac{\t}{2}\cos\frac{\v}{2}}\right)\right].
\end{equation}
Observe that, when $w\ge1$, $\log w\le w^{\varepsilon}/\varepsilon$ for any $\varepsilon>0$. By taking $\varepsilon=\frac{1-\s}{2}$, we then see that the $\log$ term into the square brackets above is bounded by
\begin{align}
    \frac{2}{1-\s}\left(\frac{1-\sin\frac{\t}{2}\sin\frac{\v}{2}} {1-\sin\frac{\t}{2}\sin\frac{\v}{2}-\cos\frac{\t}{2}\cos\frac{\v}{2}}\right)^{\frac{1-\s}{2}} &\le\frac{2(1-\s)^{-1}}{\left(1-\sin\frac{\t}{2}\sin\frac{\v}{2}-\cos\frac{\t}{2}\cos\frac{\v}{2}\right)^{\frac{1-\s}{2}}} \nonumber\\
    &=\frac{2^{\frac{1+\s}{2}}(1-\s)^{-1}}{|\sin\frac{\t-\v}{4}|^{1-\s}}.\label{a}
\end{align}

Now we deal with $J_1$. Making the change of variable $\cosh\frac{t}{2}-1=s^2$,
$$J_1=\int_0^{\sqrt{\cosh\frac{1}{2}-1}}\frac{4s}{\left(s^2+(1-z)\right)^{\a+\b+2}}\frac{ds}{\left(2\log(s^2+1+s\sqrt{s^2+2}) \right)^{1-\sigma}}.$$
Note that, for $w\ge0$, $\log(1+w)\ge\frac{w}{1+w}$. Therefore $2\log(s^2+1+s\sqrt{s^2+2})\ge 2\frac{s(s+\sqrt{s^2+2})}{1+s^2+s\sqrt{s^2+2}}$ and this quantity is bounded from below by a universal constant times $s$, for all $s\in(0,\cosh\frac{1}{2}-1)$. Hence,
\begin{align*}
    J_1 &\le C_\s\int_0^{\sqrt{\cosh\frac{1}{2}-1}}\frac{s^\s}{(s^2+(1-z))^{\a+\b+2}}\,ds \\
    &= \frac{C_{\s}}{(1-z)^{\a+\b+2}}\int_0^{\sqrt{\cosh\frac{1}{2}-1}}\frac{s^{\s}}{\left(\frac{s^2}{1-z}+1\right)^{\a+\b+2}}\,ds \\
    &= \frac{C_\s}{2(1-z)^{\a+\b+3/2-\s/2}}\int_0^{\frac{\cosh1/2-1}{1-z}}\frac{r^{\frac{\s-1}{2}}}{(r+1)^{\a+\b+2}}\,dr \\
    &\le\frac{C}{(1-z)^{\a+\b+3/2-\s/2}}\int_0^{\infty}\frac{r^{\frac{\s-1}{2}}}{(r+1)^{\a+\b+2}}\,dr\\
    &=\frac{C}{(1-z)^{\a+\b+3/2-\s/2}}\frac{\Gamma(\s/2+1/2)\Gamma(\a+\b+3/2-\s/2)}{\Gamma(\a+\b+2)},
\end{align*}
where $C$ is a constant depending only on $\s$ but not on $\a$ and $\b$. Using this estimate for $J_1$ and applying \eqref{1} of Lemma \ref{lem:Sharp estimate} to the integral against $d\Pi_\a(u)$ and then again to the integral against $d\Pi_\b(v)$ we obtain
\begin{align}
    &\int_0^1\P_t^{\a,\b}(\t,\v)\,\frac{dt}{t^{1-\s}} \nonumber\\
     &\le \frac{C\Gamma(\s/2+1/2)\Gamma(\a+\b+3/2-\s/2)}{2^{\a+\b+1}\Gamma(\a+1)\Gamma(\b+1)}\int_{-1}^1\int_{-1}^1\frac{d\Pi_\a(u)\,d\Pi_\b(v)} {\left(1-u\sin\frac{\t}{2}\sin\frac{\v}{2}-v\cos\frac{\t}{2}\cos\frac{\v}{2}\right)^{\a+\b+3/2-\s/2}}\nonumber \\
     &\le \frac{C\Gamma(\s/2+1/2)\Gamma(\b+1-\s/2)}{2^{\b+1/2}\Gamma(\b+1)\sqrt{\pi} \left(\sin\frac{\t}{2}\sin\frac{\v}{2}\right)^{\a+1/2}}\int_{-1}^1 \frac{d\Pi_\b(v)}{\left(1-\sin\frac{\t}{2}\sin\frac{\v}{2}-v\cos\frac{\t}{2}\cos\frac{\v}{2}\right)^{\b+1-\s/2}}\nonumber \\
     &\le \frac{C\Gamma(\s/2+1/2)\Gamma(1/2-\s/2)}{\pi\left(\sin\frac{\t}{2}\sin\frac{\v}{2}\right)^{\a+1/2} \left(\cos\frac{\t}{2}\cos\frac{\v}{2}\right)^{\b+1/2}} \frac{1}{\left(1-\sin\frac{\t}{2}\sin\frac{\v}{2}-\cos\frac{\t}{2}\cos\frac{\v}{2}\right)^{\frac{1-\s}{2}}}\nonumber \\
     &= \frac{C_\s}{\left(\sin\frac{\t}{2}\sin\frac{\v}{2}\right)^{\a+1/2} \left(\cos\frac{\t}{2}\cos\frac{\v}{2}\right)^{\b+1/2}} \frac{1}{|\sin\frac{\t-\v}{4}|^{1-\s}}.\nonumber
\end{align}
Plugging together this last estimate with \eqref{b} and \eqref{a} the conclusion follows.
\end{proof}

\subsection{Proof of Theorem \ref{Thm:Lp-Lq}}

To prove Theorem \ref{Thm:Lp-Lq} we first need an analysis of weighted inequalities of fractional integral operators on the interval $(0,\pi)$.

\subsubsection{Weighted inequalities for fractional integrals on $(0,\pi)$}

For $0<\sigma<1$, we denote by $\mathcal{I}_{\sigma}$ the fractional integral operator defined as
$$\mathcal{I}_{\s}g(\t)=\int_0^{\pi}\frac{g(\v)}{|\t-\v|^{1-\sigma}}\,d\v, \quad\hbox{for}~g\in L^1((0,\pi),d\t).$$
A weight $w$ is a strictly positive and finite almost everywhere function on $(0,\pi)$. Recall that $A_\infty=\bigcup_{1\leq p<\infty}A_p$, where the Muckenhoupt class $A_p$, $p>1$, is the set of locally integrable weights $w$ for which $w^{-1/(p-1)}\in L^1_{\mathrm{loc}}(0,\pi)$ and
$$\sup_{I\subset(0,\pi)}\left(\frac{1}{|I|}\int_Iw(\t)\,d\t\right)\left(\frac{1}{|I|}\int_Iw(\t)^{-1/(p-1)}\,d\t\right)^{p-1}<\infty.$$
Here $|I|$ denotes the length of the interval $I$. See \cite[Chapter~7]{Duo}. The following result on boundedness of fractional integrals can be easily deduced from the paper by A. Bernardis and O. Salinas \cite{Bernardis-Salinas}.

\begin{thm}\label{Thm:BS}
Suppose $0<\s<1$ and $1<p\le q<\infty$. Let $(w,v)$ be a pair of weights with $w\in A_\infty$ and $\bar{v}:=v^{-1/(p-1)}\in A_{\infty}$. Then
$$\|\mathcal{I}_{\s}f\|_{L^q((0,\pi),w\,d\t)}\le C\|f\|_{L^p((0,\pi),v\,d\t)},\quad\hbox{for all}~f\in L^p((0,\pi),v\,d\t),$$
if and only if
\begin{equation}\label{eq3}
\frac{w(I)^{1/q}\bar{v}(I)^{(p-1)/p}}{|I|^{1-\s}}\le C<\infty,\quad\hbox{for every interval}~I\subset(0,\pi).
\end{equation}
Here, for a set $A$ and a weight $u$, we write $\displaystyle u(A)=\int_Au(\t)\,d\t$.
\end{thm}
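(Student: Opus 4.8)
The plan is to deduce the theorem from the two-weight theory for fractional integrals on spaces of homogeneous type developed in \cite{Bernardis-Salinas}, together with an elementary testing argument for the necessity of \eqref{eq3}. The first step is to recognize the interval $(0,\pi)$ equipped with Lebesgue measure $d\t$ and the Euclidean distance as a bounded space of homogeneous type: balls are subintervals, and Lebesgue measure is doubling, so the Muckenhoupt classes $A_p$ and $A_\infty$ appearing in the statement are exactly the ones attached to this structure. In this setting the kernel $|\t-\v|^{-(1-\s)}$ of $\mathcal{I}_\s$ is comparable to $\mu(B(\t,|\t-\v|))^{\s-1}$, since $\mu(B(\t,r))\approx r$ for intervals; hence $\mathcal{I}_\s$ is, up to multiplicative constants, the fractional integral operator of order $\s$ to which the results of \cite{Bernardis-Salinas} apply.

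For the necessity of \eqref{eq3} I would argue directly, without appealing to the deep theory. Fix an interval $I\subset(0,\pi)$ and test the assumed inequality on $f=\bar v\,\chi_I$. On one hand $\|f\|_{L^p(v\,d\t)}=\bar v(I)^{1/p}$, because $v\,\bar v^p=v^{-1/(p-1)}=\bar v$. On the other hand, for $\t\in I$ one has $|\t-\v|\le|I|$ whenever $\v\in I$, so
\[
\mathcal{I}_\s f(\t)\ge\frac{1}{|I|^{1-\s}}\int_I\bar v(\v)\,d\v=\frac{\bar v(I)}{|I|^{1-\s}},\qquad\t\in I,
\]
and therefore $\|\mathcal{I}_\s f\|_{L^q(w\,d\t)}\ge |I|^{\s-1}\bar v(I)\,w(I)^{1/q}$. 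Combining the two bounds with the hypothesized boundedness yields $|I|^{\s-1}w(I)^{1/q}\bar v(I)^{(p-1)/p}\le C$, which is exactly \eqref{eq3}.

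For the sufficiency I would invoke the main two-weight theorem of \cite{Bernardis-Salinas}. The point is that, once both $w\in A_\infty$ and $\bar v\in A_\infty$, the single condition \eqref{eq3} (the homogeneous-space analogue of the $A_{p,q}$ condition) becomes sufficient for the strong-type $L^p(v)\to L^q(w)$ bound; this is false for general pairs of weights, and the two $A_\infty$ hypotheses are precisely what repairs it. Concretely, the mechanism in \cite{Bernardis-Salinas} is to dominate $\mathcal{I}_\s$ in $L^q(w)$ by the fractional maximal operator of the same order through a Coifman--Fefferman / good-$\lambda$ argument that uses $w\in A_\infty$, and then to establish the two-weight bound for that maximal operator, where \eqref{eq3} together with $\bar v\in A_\infty$ closes the estimate via a covering and packing argument.

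The main obstacle is not any single computation but the careful transcription of hypotheses and normalizations between the abstract homogeneous-space statement of \cite{Bernardis-Salinas} and the concrete operator $\mathcal{I}_\s$ on $(0,\pi)$: one must check that the comparability $\mu(B(\t,r))\approx r$, including near the endpoints $0$ and $\pi$ where balls are truncated, does not affect the constants, that the order of the fractional integral matches the parameter convention used there, and that the two-weight condition of \cite{Bernardis-Salinas} reduces exactly to \eqref{eq3} in the one-dimensional Lebesgue case. Once these identifications are made, the theorem follows by direct specialization.
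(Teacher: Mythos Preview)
Your proposal is correct and follows exactly the approach indicated by the paper, which in fact gives no proof at all beyond the sentence ``The following result on boundedness of fractional integrals can be easily deduced from the paper by A.~Bernardis and O.~Salinas \cite{Bernardis-Salinas}.'' Your write-up supplies precisely the details that deduction requires---identifying $(0,\pi)$ with Lebesgue measure as a space of homogeneous type, matching $\mathcal{I}_\sigma$ with the abstract fractional integral, passing through the fractional maximal function treated in \cite{Bernardis-Salinas} via a good-$\lambda$ comparison under $w\in A_\infty$, and the standard testing argument for necessity---so there is nothing to correct.
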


The weights we are going to need are
\begin{equation}\label{peso w}
w(\t)=(\sin\tfrac{\t}{2})^{(2\a+1)(1-q/2)}(\cos\tfrac{\t}{2})^{(2\b+1)(1-q/2)},
\end{equation}
and
\begin{equation}\label{peso v}
v(\t)=(\sin\tfrac{\t}{2})^{(2\a+1)(1-p/2)}(\cos\tfrac{\t}{2})^{(2\b+1)(1-p/2)},
\end{equation}
where $p,q,\a,\b$ are as in \eqref{chu 1} and \eqref{chu 2} of Theorem \ref{Thm:Lp-Lq}. Let us check that these weights satisfy the hypotheses of Theorem \ref{Thm:BS}. First we have to see that $w$ and $\bar{v}=v^{-1/(p-1)}$ are in $A_\infty$. Next, condition \eqref{eq3} will follow from the relation between the exponents $p$ and $q$ and the parameters $\a$ and $\b$ that we established. In order to do all these computations we will need the following two elementary lemmas.

\begin{lem}\label{Lem:Mario Bros}
Let $\lambda>-1$, $0\le a<b<\infty$. Then, $\displaystyle \min\{\lambda+1,1\}\le\frac{b^{\lambda+1}-a^{\lambda+1}}{b^{\lambda}(b-a)}\le \max\{\lambda+1,1\}$.
\end{lem}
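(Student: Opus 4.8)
The plan is to reduce the two-sided estimate to a one-variable inequality by homogeneity. If $a=0$ the ratio equals $1$ (recall $\lambda+1>0$, so $0^{\lambda+1}=0$), which lies between $\min\{\lambda+1,1\}$ and $\max\{\lambda+1,1\}$, so we may assume $a>0$ and put $t=a/b\in(0,1)$. Factoring $b^{\lambda+1}$ out of numerator and denominator gives
\[
\frac{b^{\lambda+1}-a^{\lambda+1}}{b^{\lambda}(b-a)}=\frac{1-t^{\lambda+1}}{1-t}=:g(t),
\]
so it suffices to show $\min\{\lambda+1,1\}\le g(t)\le\max\{\lambda+1,1\}$ for every $t\in(0,1)$. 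The key is to write numerator and denominator as integrals over $[t,1]$,
\[
1-t^{\lambda+1}=(\lambda+1)\int_t^1 s^{\lambda}\,ds,\qquad 1-t=\int_t^1\,ds,
\]
which displays $g(t)$ as $(\lambda+1)$ times the average of $s^{\lambda}$ over $[t,1]\subset(0,1)$, together with the elementary comparison of $t^{\lambda+1}$ with $t$.

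Then I would split according to the sign of $\lambda$. If $\lambda\ge0$, then $s^{\lambda}\le1$ on $[t,1]$ gives $1-t^{\lambda+1}\le(\lambda+1)(1-t)$, while $t^{\lambda+1}=t\cdot t^{\lambda}\le t$ gives $1-t^{\lambda+1}\ge1-t$; hence $1\le g(t)\le\lambda+1$, which is exactly the claim since here $\min\{\lambda+1,1\}=1$ and $\max\{\lambda+1,1\}=\lambda+1$. If $-1<\lambda<0$, then $s^{\lambda}\ge1$ on $(0,1)$ gives $1-t^{\lambda+1}\ge(\lambda+1)(1-t)$, while $t^{\lambda+1}=t\cdot t^{\lambda}\ge t$ gives $1-t^{\lambda+1}\le1-t$; hence $\lambda+1\le g(t)\le1$, which is again the claim since now $\min\{\lambda+1,1\}=\lambda+1$ and $\max\{\lambda+1,1\}=1$. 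The remaining case $\lambda=0$ is trivial: $g\equiv1$ and the two bounds coincide.

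There is essentially no hard step here. The only points requiring a little care are the separate treatment of $a=0$ in the homogeneity reduction and keeping straight which of $\lambda+1$ and $1$ is the minimum and which the maximum according to the sign of $\lambda$. An alternative would be to apply the mean value theorem to write the ratio as $(\lambda+1)(\xi/b)^{\lambda}$ for some $\xi\in(a,b)$ and then study monotonicity of $g$ through the sign of $g'$, but the integral-averaging argument above is shorter and sidesteps that derivative computation.
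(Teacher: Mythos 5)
Your proof is correct and follows essentially the same route as the paper: both reduce to the normalized ratio $\frac{1-t^{\lambda+1}}{1-t}$ with $t=a/b$, obtain one bound from the elementary comparison of $t^{\lambda+1}$ with $t$, and obtain the other by controlling the factor $(\lambda+1)s^{\lambda}$ on $(t,1)$ --- you do this via the integral average $1-t^{\lambda+1}=(\lambda+1)\int_t^1 s^{\lambda}\,ds$ where the paper invokes the mean value theorem, a purely cosmetic difference. Your explicit treatment of $a=0$ and of the case $-1<\lambda<0$ (which the paper dismisses as analogous) is fine and slightly more complete.
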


\begin{proof}
We consider first $\lambda+1\ge1$. Then, as $0\le a/b<1$, we have $0\le(a/b)^{\lambda+1}\le a/b<1$, which implies that
$$\frac{b^{\lambda+1}-a^{\lambda+1}}{b^{\lambda}(b-a)}=\frac{1-(a/b)^{\lambda+1}}{1-a/b}\ge1.$$
On the other hand, applying the mean value theorem to the function $x^{\lambda+1}$, it follows that
\[
\frac{b^{\lambda+1}-a^{\lambda+1}}{b^{\lambda}(b-a)}=\frac{1-(a/b)^{\lambda+1}}{1-a/b}=(\lambda+1)\xi^{\lambda}\le \lambda+1
\]
for some $\xi\in(a/b,1)$. In the case $0<\lambda+1<1$, the proof follows analogous reasonings.
\end{proof}

\begin{lem}\label{Lem:Super Mario Bros}
Let $\lambda,\nu>-1$ and $I=(a,b)\subseteq (0,\pi)$. Then
$$\int_I \t^{\lambda}(\pi-\t)^{\nu}\,d\t\sim |I||I_0|^{\lambda}|I_{\pi}|^\nu,$$
where, for $\xi\in(0,\pi)$, $I_\xi$ denotes the least interval in $(0,\pi)$ that contains $I$ and $\xi$. Moreover,  the symbol $\sim$ means that there exist constants $0<c\leq C<\infty$, depending only on $\lambda$ and $\nu$, such that
\[
c\le\frac{1}{|I||I_0|^{\lambda}|I_{\pi}|^\nu}\int_I \t^{\lambda}(\pi-\t)^{\nu}\,d\t\le C.
\]
\end{lem}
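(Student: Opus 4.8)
The plan is to reduce the two-variable weight $\t^\lambda(\pi-\t)^\nu$ to a one-factor problem by splitting $(0,\pi)$ at the midpoint $\pi/2$ and exploiting that on $(0,\pi/2]$ the factor $(\pi-\t)^\nu$ is comparable to the constant $(\pi/2)^\nu$, while on $[\pi/2,\pi)$ the factor $\t^\lambda$ is comparable to $(\pi/2)^\lambda$. This is legitimate because an interval $I\subseteq(0,\pi)$ can be written as a union of at most two subintervals, one contained in $(0,\pi/2]$ and one in $[\pi/2,\pi)$, and both $|I|$ and the quantities $|I_0|$, $|I_\pi|$ are additive-up-to-constants under such a decomposition; so it suffices to prove the estimate separately on each half. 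By the symmetry $\t\leftrightarrow\pi-\t$ (which swaps $\lambda\leftrightarrow\nu$ and $I_0\leftrightarrow I_\pi$) it is enough to treat $I=(a,b)\subseteq(0,\pi/2]$, where the claim becomes $\int_a^b \t^\lambda\,d\t\sim (b-a)\,b^\lambda$, since for such an interval $I_0=(0,b)$ so $|I_0|=b$, while $|I_\pi|\sim\pi/2$ is a constant and $(\pi-\t)^\nu\sim 1$.

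The remaining one-dimensional statement $\int_a^b\t^\lambda\,d\t\sim(b-a)b^\lambda$ is exactly Lemma 2.7: indeed $\int_a^b\t^\lambda\,d\t=\frac{b^{\lambda+1}-a^{\lambda+1}}{\lambda+1}$, and Lemma 2.7 gives
$$\min\{\lambda+1,1\}\le\frac{b^{\lambda+1}-a^{\lambda+1}}{b^\lambda(b-a)}\le\max\{\lambda+1,1\},$$
so dividing by $\lambda+1$ yields constants $c,C$ depending only on $\lambda$ with $c\le\frac{1}{(b-a)b^\lambda}\int_a^b\t^\lambda\,d\t\le C$, as required (the hypothesis $\lambda>-1$ is what makes the integral finite and $\lambda+1>0$, so the constants are positive). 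Then I would recombine: writing $I=I'\cup I''$ with $I'=I\cap(0,\pi/2]$ and $I''=I\cap[\pi/2,\pi)$, apply the half-interval estimate to each piece, and observe that $|I_0|\sim\max(|I'|,|I_0'|)$ etc., so that the sum of the two pieces is comparable to $|I||I_0|^\lambda|I_\pi|^\nu$; the precise bookkeeping here is routine but slightly tedious.

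The only genuinely delicate point is the recombination step — verifying that $|I_0|$ and $|I_\pi|$ (the lengths of the least intervals containing $I$ together with $0$ or $\pi$) behave well under the midpoint split, and that when $I$ straddles $\pi/2$ neither piece is degenerate in a way that breaks the comparability. This is handled by noting that $|I_0|=b$ and $|I_\pi|=\pi-a$ when $I=(a,b)$, so these are monotone in the endpoints and the constants are absorbed into the $\sim$; there is no real obstacle, just careful case analysis on the position of $I$ relative to $\pi/2$. Everything else is a direct appeal to Lemma 2.7.
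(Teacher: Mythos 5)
Your proposal is correct and follows essentially the same route as the paper: reduce to Lemma \ref{Lem:Mario Bros} on subintervals where one of the factors $\t^\lambda$ or $(\pi-\t)^\nu$ is comparable to a constant, and handle the interval straddling the middle of $(0,\pi)$ separately (the paper uses the overlapping cases $b\le 2\pi/3$, $a\ge\pi/3$, and $a<\pi/3<2\pi/3<b$, in which last case both sides are simply comparable to constants, which slightly shortens your recombination step). The bookkeeping you defer does go through exactly as you sketch, since in the straddling case $|I_0|=b$ and $|I_\pi|=\pi-a$ are both bounded between $\pi/2$ and $\pi$.
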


\begin{proof}
We consider three cases. First assume that $0\le a<b\le 2\pi/3$. By Lemma \ref{Lem:Mario Bros}, we have
\[
\int_a^b\t^\lambda(\pi-\t)^{\nu}\,d\t\sim\int_a^b\t^\lambda\,d\t\sim b^{\lambda+1}-a^{\lambda+1}\sim b^{\lambda}(b-a).
\]
Analogously, for the case $\pi/3\le a<b\le\pi$,
$$\int_a^b\t^\lambda(\pi-\t)^{\nu}\,d\t\sim\int_a^b(\pi-\t)^\nu\,d\t\sim (\pi-a)^{\nu+1}-(\pi-b)^{\lambda+1}
\sim (\pi-a)^{\nu}(b-a).$$
Finally, when $0\le a<\pi/3<2\pi/3<b\le\pi$, by taking into account that $[\pi/3,\pi/3]\subseteq I\subseteq(0,\pi)$,
\[
\int_a^b\t^\lambda(\pi-\t)^{\nu}\,d\t\sim1 \sim(b-a)b^{\lambda}(\pi-a)^{\nu}.
\]
\end{proof}

We are ready to check that the weights $w$ in \eqref{peso w} and $v$ in \eqref{peso v} satisfy the hypotheses of Theorem \ref{Thm:BS}. Recall that for $\t\in(0,\pi)$, $\sin\frac{\t}{2}$ is equivalent to $\t$, and $\cos\frac{\t}{2}$ is equivalent to $(\pi-\t)$. By taking into account \eqref{chu 1} and Lemma \ref{Lem:Super Mario Bros} it is easy to see that $w\in A_q$ and $\bar{v}=v^{-1/(p-1)}\in A_{p/(p-1)}$. Therefore $w,\bar{v}\in A_\infty$. For \eqref{eq3} it is enough to see that
\begin{align*}
    P &:= \frac{1}{|I|^{1-\sigma}}\left(\int_I\t^{(2\a+1)(1-q/2)}(\pi-\t)^{(2\b+1)(1-q/2 )}\,d\t\right)^{1/q} \\
     &\qquad \times \left(\int_I\t^{(2\a+1)(1-p/2)\left(\frac{-1}{p-1}\right)}(\pi-\t)^{(2\b+1)(1-p/2) \left(\frac{-1}{p-1}\right)}\,d\t\right)^{(p-1)/p}\leq C<\infty,
\end{align*}
for every interval $I\subset(0,\pi)$. First, note that by \eqref{chu 1}, the exponents of $\t$ and $(\pi-\t)$ in both integrals are larger than $-1$. Therefore, Lemma \ref{Lem:Super Mario Bros} can be applied. Moreover, \eqref{chu 2} implies $1/q-1/p+\s\ge0$. Hence,
\begin{align*}
    P &\sim |I|^{\frac{1}{q}-\frac{1}{p}+\sigma}|I_0|^{(2\a+1)(1-\frac{q}{2})\frac{1}{q}+(2\a+1)(1-\frac{p}{2})\left(\frac{-1}{p}\right)} |I_{\pi}|^{(2\b+1)(1-\frac{q}{2})\frac{1}{q}+(2\b+1)(1-\frac{p}{2})\left(\frac{-1}{p}\right)} \\
     &= \frac{(\pi(b-a))^{\frac{1}{q}-\frac{1}{p}+\s}}{\pi^{\frac{1}{q}-\frac{1}{p}+\sigma}} |I_0|^{(2\a+1)(1-\frac{q}{2})\frac{1}{q}+(2\a+1)(1-\frac{p}{2})\left(\frac{-1}{p}\right)} |I_{\pi}|^{(2\b+1)(1-\frac{q}{2})\frac{1}{q}+(2\b+1)(1-\frac{p}{2})\left(\frac{-1}{p}\right)} \\
     &\le C|I_0|^{\frac{1}{q}-\frac{1}{p}+\sigma+(2\a+1)(1-\frac{q}{2})\frac{1}{q}+(2\a+1)(1-\frac{p}{2})\left(\frac{-1}{p}\right)} |I_{\pi}|^{\frac{1}{q}-\frac{1}{p}+\sigma+(2\b+1)(1-\frac{q}{2})\frac{1}{q}+(2\b+1)(1-\frac{p}{2})\left(\frac{-1}{p}\right)},
\end{align*}
where in the last inequality we used that $(\pi(b-a))^{\delta}\le (b(\pi-a))^{\delta}$ for $\delta=1/q-1/p+\s\ge0$. By \eqref{chu 2}, the last quantity above is uniformly bounded by a constant independent of $I$.

Thus, by applying Theorem \ref{Thm:BS} with $w$ as in \eqref{peso w}, $v$ as in \eqref{peso v} and $p,q,\a,\b$ as in \eqref{chu 1}-\eqref{chu 2}, we get that there exists a constant $C$ depending only on $\s$, $\a$ and $\b$ such that
\begin{equation}\label{weighted}
\|\mathcal{I}_\s g\|_{L^q((0,\pi),w\,d\t)}\leq C\|g\|_{L^p((0,\pi),v\,d\t)},
\end{equation}
for all $g\in L^p((0,\pi),v\,d\t)$. Recall now the following result about vector-valued extensions of bounded operators proved by J. Marcinkiewicz and A. Zygmund in \cite{Marcinkiewicz-Zygmund}.

\begin{lem}[Marcinkiewicz--Zygmund]\label{Lem:Marcin}
Consider $L^p=L^p(X,m)$, where $(X,m)$ is a $\sigma$-finite measure space. Let $T:L^p\to L^q$ be a bounded linear operator, $0<p\leq q<\infty$, with norm $\|T\|$. Then $T$ has an $\ell^2$--valued extension and
$$\Big\|\Big(\sum_j|Tf_j|^2\Big)^{1/2}\Big\|_{L^q}\leq\|T\|\Big\|\Big(\sum_j|f_j|^2\Big)^{1/2}\Big\|_{L^p},\quad f_j\in L^p.$$
\end{lem}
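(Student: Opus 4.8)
The plan is to prove the inequality by Gaussian randomization, which linearizes the $\ell^2$ norm and thereby reduces the vector-valued statement to the scalar boundedness of $T$. First I would reduce to a finite family $f_1,\dots,f_N$: once the estimate is established for each $N$ with a constant independent of $N$, the general case follows by letting $N\to\infty$ and applying the monotone convergence theorem on both sides, since $\sum_{j=1}^N|Tf_j|^2$ and $\sum_{j=1}^N|f_j|^2$ increase pointwise to the full sums. For finite $N$ every object below is manifestly integrable, so there are no convergence subtleties.

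Next I would fix a probability space $(\Omega,\mathbb{P})$ carrying independent standard Gaussian random variables $g_1,\dots,g_N$ (real if the functions are real-valued, complex if complex-valued). The key identity is that for any scalars $a_1,\dots,a_N$ the combination $\sum_j a_j g_j$ is again Gaussian with variance $\sum_j|a_j|^2$, so that for every exponent $r>0$,
\begin{equation*}
\mathbb{E}\Big|\sum_{j=1}^N a_j g_j\Big|^r=\gamma_r\Big(\sum_{j=1}^N|a_j|^2\Big)^{r/2},\qquad \gamma_r:=\mathbb{E}|g_1|^r.
\end{equation*}
This is what singles out $\ell^2$ as the distinguished target, and, crucially, the \emph{same} constant $\gamma_q$ will govern both the output and the input computations below, which is what produces the unimproved constant $\|T\|$ after cancellation.

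Applying the identity pointwise with $r=q$ and $a_j=Tf_j(x)$, using the linearity of $T$ to write $\sum_j g_j(\omega)Tf_j=T\big(\sum_j g_j(\omega)f_j\big)$, and then Tonelli's theorem (the integrand is nonnegative) to exchange the integral over $X$ with the expectation, I would obtain
\begin{align*}
\Big\|\Big(\sum_j|Tf_j|^2\Big)^{1/2}\Big\|_{L^q}^q
&=\gamma_q^{-1}\,\mathbb{E}\,\Big\|T\Big(\sum_j g_j f_j\Big)\Big\|_{L^q}^q\\
&\le \gamma_q^{-1}\|T\|^q\,\mathbb{E}\,\Big\|\sum_j g_j f_j\Big\|_{L^p}^q,
\end{align*}
the last step being the scalar boundedness of $T$ applied for each fixed $\omega$.

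It remains to bound $\mathbb{E}\big\|\sum_j g_j f_j\big\|_{L^p}^q$, and this is the one place where the hypothesis $p\le q$ is essential. Writing this quantity as $\big\|\int_X|\sum_j g_j f_j|^p\,dm\big\|_{L^{q/p}(\Omega)}^{q/p}$ and invoking Minkowski's integral inequality for the exponent $q/p\ge1$, I would pass the $L^{q/p}(\Omega)$ norm inside the integral over $X$, then apply the Gaussian identity once more with $r=q$ and $a_j=f_j(x)$ to get $\big(\mathbb{E}|\sum_j g_j f_j(x)|^q\big)^{p/q}=\gamma_q^{p/q}\big(\sum_j|f_j(x)|^2\big)^{p/2}$, which yields
\begin{equation*}
\mathbb{E}\,\Big\|\sum_j g_j f_j\Big\|_{L^p}^q\le \gamma_q\,\Big\|\Big(\sum_j|f_j|^2\Big)^{1/2}\Big\|_{L^p}^q.
\end{equation*}
Substituting back, the factors $\gamma_q^{-1}$ and $\gamma_q$ cancel and only $\|T\|^q$ survives; taking $q$-th roots gives the claim. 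The main obstacle is conceptual rather than computational: choosing the randomization so that the $\ell^2$ structure is reproduced with one and the same constant across every $L^r$ norm. That is exactly the Gaussian identity, and it is also the reason the constant in the conclusion is precisely $\|T\|$ with no loss.
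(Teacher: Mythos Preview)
Your proof is correct. The paper does not actually prove this lemma; it simply states the result and cites the original Marcinkiewicz--Zygmund reference, so there is no ``paper's proof'' to compare against. Your Gaussian randomization argument is a standard, self-contained proof: the exact moment identity $\mathbb{E}\,\bigl|\sum_j a_j g_j\bigr|^r=\gamma_r\bigl(\sum_j|a_j|^2\bigr)^{r/2}$, the application of Minkowski's integral inequality with exponent $q/p\ge1$ (which is exactly where the hypothesis $p\le q$ is used), and the cancellation of $\gamma_q$ that delivers the sharp constant $\|T\|$ are all carried out correctly. It is worth noting that the Gaussian choice is what makes the constant come out as $\|T\|$ with no loss; a Rademacher randomization together with Khintchine's inequality would only yield $C_{p,q}\|T\|$.
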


By applying Lemma \ref{Lem:Marcin} with $T=\mathcal{I}_\s$, since \eqref{weighted} holds, there exists a constant $C$ depending only on $\a$, $\b$ and $\s$ such that
\begin{equation}\label{weighted vectorial}
\Big\|\Big(\sum_{j=0}^\infty|\mathcal{I}_\s g_j|^2\Big)^{1/2}\Big\|_{L^q((0,\pi),w\,d\t)}\leq C\Big\|\Big(\sum_{j=0}^\infty |g_j|^2\Big)^{1/2}\Big\|_{L^p((0,\pi),v\,d\t)},
\end{equation}
for $g_j\in L^p((0,\pi),v\,d\t)$.

\subsubsection{Conclusion of the proof of Theorem \ref{Thm:Lp-Lq}}

We first observe that, by Theorem \ref{Thm:Jacobi kernel}, there exists a constant $C$ depending only on $\sigma$ and $\b$ such that for a nonnegative function $f$,
\begin{equation*}
\begin{aligned}
    u_j(\t)(\J^{\a+aj,\b+bj}&)^{-\s/2}(u_j^{-1}f)(\t) = u_j(\t)\int_0^\pi\K_\s^{\a+aj,\b+bj}(\t,\v)u_j^{-1}(\v)f(\v)~d\mu_{\a+aj,\b+bj}(\v) \\
     &\leq \frac{C}{(\sin\frac{\t}{2})^{\a+1/2}(\cos\frac{\t}{2})^{\b+1/2}} \int_0^\pi\frac{(\sin\frac{\v}{2})^{\a+1/2}(\cos\frac{\v}{2})^{\b+1/2}f(\v)}{|\t-\v|^{1-\s}}\,d\v \\
     &= \frac{C}{(\sin\frac{\t}{2})^{\a+1/2}(\cos\frac{\t}{2})^{\b+1/2}}\,\mathcal{I}_\s g(\t),
\end{aligned}
\end{equation*}
where
$$g(\t):=(\sin\tfrac{\t}{2})^{\a+1/2}(\cos\tfrac{\t}{2})^{\b+1/2}f(\t).$$
Therefore, from \eqref{weighted vectorial},
\begin{align*}
    \Big\|\Big(\sum_{j=0}^\infty&|u_j(\mathcal{J}^{\a+aj,\b+bj})^{-\s/2}(u_j^{-1}f_j)|^2\Big)^{1/2}\Big\|_{L^q((0,\pi),d\mu_{\a,\b})} \\
    &\leq C\Big\|\Big(\sum_{j=0}^\infty\frac{1}{(\sin\frac{\t}{2})^{2\a+1}(\cos\frac{\t}{2})^{2\b+1}}\,|\mathcal{I}_\s g_j(\t)|^2\Big)^{1/2}\Big\|_{L^q((0,\pi),d\mu_{\a,\b})} \\
    &= C\Big\|\Big(\sum_{j=0}^\infty|\mathcal{I}_\s g_j(\t)|^2\Big)^{1/2}\Big\|_{L^q((0,\pi),w\,d\t)} \\
    &\leq C\Big\|\Big(\sum_{j=0}^\infty |g_j|^2\Big)^{1/2}\Big\|_{L^p((0,\pi),v\,d\t)}= C\Big\|\Big(\sum_{j=0}^\infty |f_j|^2\Big)^{1/2}\Big\|_{L^p(d\mu_{\a,\b})},
\end{align*}
as desired.

\section{Fractional integrals on compact Riemannian symmetric spaces of rank one}\label{Section:Riemannian}

In this section we define the fractional integrals $(-\Delta_M)^{-\sigma/2}$ and the mixed norm spaces $L^p(L^2(M))$ on compact Riemannian symmetric spaces of rank one $M$ and we prove Theorems \ref{Thm:esfera} and \ref{Thm:proyectivos}.

\subsection{The unit sphere $\S^d$}\label{subsection:redonda}

Let $\S^d=\set{x\in\Real^{d+1}:x_1^2+\cdots+x_{d+1}^2=1}$ be the unit sphere in $\Real^{d+1}$, $d\geq2$. Let $\tilde{\Delta}_{\S^d}$ be the Laplace--Beltrami operator on $\S^d$. We set
$$-\Delta_{\S^d}:=-\tilde{\Delta}_{\S^d}+\left(\frac{d-1}{2}\right)^2.$$
It is well-known that $L^2(\S^d)=\bigoplus_{n=0}^\infty\mathcal{H}_n(\S^d)$, where $\mathcal{H}_n(\S^d)$ is the set of spherical harmonics of degree $n$ in $d+1$ variables, see \cite{BGM, Dunkl-Xu}. Each space $\mathcal{H}_n(\S^d)$ is an eigenspace of $\tilde{\Delta}_{\S^d}$ with eigenvalue $-n(n+d-1)$. Hence, $-\Delta_{\S^d}\mathcal{H}_n(\S^d)=(n+\frac{d-1}{2})^2\mathcal{H}_n(\S^d)$ and we can define, for $0<\sigma<1$, the fractional integral operator as
\begin{equation}\label{definicion integral fraccionaria}
(-\Delta_{\S^d})^{-\sigma/2}f=\sum_{n=0}^\infty\frac{1}{(n+\frac{d-1}{2})^\sigma}\operatorname{Proj}_{\mathcal{H}_n(\S^d)}(f),
\end{equation}
where $\operatorname{Proj}_{\mathcal{H}_n(\S^d)}(f)$ denotes the projection of $f\in L^2(\S^d)$ onto the eigenspace $\mathcal{H}_n(\S^d)$.

Now we define the mixed norm space  $L^p(L^2(\S^d))$. To this end, we need the following coordinates on $\S^d$, known as \textit{geodesic polar coordinates}. Each point on the sphere can be represented as
\begin{equation}\label{geodesicas}
\Phi(\theta,x')=(\cos\theta,x_1'\sin\theta,\ldots,x_d'\sin\theta)\in\S^d,
\end{equation}
for $\theta\in[0,\pi]$ and $x'\in\S^{d-1}$. We have
$$\int_{\S^d}f(x)\,dx=\int_0^\pi\int_{\S^{d-1}}f(\Phi(\theta,x'))\,dx'\,(\sin\theta)^{d-1}\,d\theta,$$
see \cite[p.~57]{BGM}. The mixed norm space $L^p(L^2(\S^d))$, $1\leq p<\infty$, is the set of functions $f$ on $\S^d$ such that
$$\|f\|_{L^p(L^2(\S^d))}:=\Bigg(\int_0^\pi\Bigg(\int_{\S^{d-1}}|f(\Phi(\theta,x'))|^2 dx'\Bigg)^{p/2}(\sin\theta)^{d-1}\,d\theta\Bigg)^{1/p}<\infty.$$
The spaces $L^p(L^2(\S^d))$ under this norm are Banach spaces.

By using the coordinates in \eqref{geodesicas}, it can be checked that
\begin{equation}\label{laplaciano polares}
-\Delta_{\S^d}=-\frac{\partial^2}{\partial\theta^2}-(d-1)\cot\theta\frac{\partial}{\partial\theta}+\left(\frac{d-1}{2}\right)^2 -\frac{1}{(\sin\theta)^2}\tilde\Delta_{\S^{d-1}},
\end{equation}
where $\tilde{\Delta}_\S^{d-1}$ is the spherical part of the Laplacian on $\Real^d$, acting on functions on $\Real^{d+1}$ by holding the first coordinate fixed and differentiating with respect to the remaining variables (see \cite[Section~3]{Sherman}). By using the description of $\mathcal{H}_n(\S^d)$ via spherical harmonics and the coordinates in \eqref{geodesicas}, it is an exercise to see that an orthonormal basis associated to \eqref{laplaciano polares} is given by
$$\varphi_{n,j,k}(x)=\psi_{n,j}(\theta)Y_{j,k}^d(x'),\quad x=\Phi(\theta,x'),~0<\theta<\pi,~x'\in\S^{d-1}.$$
See also \cite[Section~3]{Sherman}. Here, for $n\geq0$ and $j=0,1,\ldots,n$,
$$\psi_{n,j}(\theta)=a_{n,j}(\sin\theta)^jC_{n-j}^{j+\frac{d-1}{2}}(\cos\theta),$$
where $C_k^\lambda(x)$ is an ultraspherical polynomial as in \eqref{Gegenbauer}, and $a_{n,j}$ is a normalizing constant as follows
\begin{align*}
    a_{n,j} &= \|(\sin\theta)^jC_{n-j}^{j+\frac{d-1}{2}}(\cos\theta)\|_{L^2((0,\pi),(\sin\theta)^{d-1}\,d\theta)}^{-1} \\
     &= \frac{2^{-(j+\frac{d-1}{2})}\Gamma(2j+d-1)\Gamma(n+d/2)}{\Gamma(j+d/2)\Gamma(n+j+d-1)}\,d_{n-j}^{j+\frac{d-2}{2},j+\frac{d-2}{2}},
\end{align*}
with $d^{\a,\b}_n$ as in \eqref{que}. The functions $Y_{j,k}^d$, $k=1,\ldots,d(j)$, where $d(j)=(2j+d-2)\frac{(j+d-3)!}{j!(d-2)!}$, form an orthonormal basis of spherical harmonics on $\S^{d-1}$ of degree $j\geq0$. The orthogonal projections of $f$ onto the spaces $\mathcal{H}_n(\S^d)$ can be written as
\begin{equation}\label{proyecciones}
\operatorname{Proj}_{\mathcal{H}_n(\S^d)}(f)=\sum_{j=0}^n\sum_{k=1}^{d(j)}\langle f,\varphi_{n,j,k}\rangle_{L^2(\S^d)}\varphi_{n,j,k}.
\end{equation}

\begin{proof}[Proof of Theorem \ref{Thm:esfera} for $M=\S^d$]
Let $f$ be a function in $L^p(L^2(\S^d))$ and set $F:=f\circ\Phi$, where $\Phi$ is as in \eqref{geodesicas}. We can write
\begin{equation}\label{geografia}
F(\theta,x')=\sum_{j=0}^\infty\sum_{k=1}^{d(j)}F_{j,k}(\theta)Y_{j,k}^d(x'),\quad\hbox{for}~\theta\in[0,\pi],~x'\in\mathbb{S}^{d-1},
\end{equation}
where
$$F_{j,k}(\theta)=\int_{\mathbb{S}^{d-1}}F(\theta,x')\overline{Y_{j,k}^d(x')}\,dx'.$$
With this,
$$\|f\|_{L^p(L^2(\S^d))}=\Bigg(\int_0^\pi\Big(\sum_{j=0}^\infty\sum_{k=1}^{d(j)}|F_{j,k}(\theta)|^2\Big)^{p/2} (\sin\theta)^{d-1}d\theta\Bigg)^{1/p}.$$
By applying \eqref{geografia} and the orthogonality of the spherical harmonics $Y^d_{j,k}$, we can write
\begin{align}\label{coeficientes radiales theta}
    \langle f,\varphi_{n,j,k}\rangle_{L^2(\S^d)} &= \langle F,\psi_{n,j}Y^d_{j,k}\rangle_{L^2((0,\pi)\times\S^{d-1},(\sin\theta)^{d-1}d\theta\,dx')} \nonumber\\ &= \int_0^\pi\int_{\S^{d-1}}\sum_{l=0}^\infty\sum_{s=1}^{d(l)}F_{l,s}(\theta)Y^d_{l,s}(x')\psi_{n,j}(\t) \overline{Y^d_{j,k}(x')}\,dx'\,(\sin\theta)^{d-1}d\theta\nonumber \\
    &= \sum_{l=0}^\infty\sum_{s=1}^{d(l)}\int_0^\pi F_{l,s}(\theta)\psi_{n,j}(\t)(\sin\theta)^{d-1}d\theta\int_{\S^{d-1}}Y^d_{l,s}(x') \overline{Y^d_{j,k}(x')}\,dx' \nonumber\\
    &= \int_0^\pi F_{j,k}(\theta)\psi_{n,j}(\t)(\sin\theta)^{d-1}d\theta= \langle F_{j,k},\psi_{n,j}\rangle_{L^2((0,\pi),(\sin\theta)^{d-1}d\theta)}.
\end{align}
By using \eqref{definicion integral fraccionaria}, \eqref{proyecciones}, \eqref{coeficientes radiales theta} and algebraic manipulations in the sums,
\begin{align*}
    (-\Delta_{\S^d})^{-\sigma/2}f(x) &= \sum_{j=0}^\infty\sum_{n=j}^\infty\sum_{k=1}^{d(j)}\frac{\langle F_{j,k},\psi_{n,j}\rangle_{L^2((0,\pi),(\sin\theta)^{d-1}d\theta)}}{(n+\frac{d-1}{2})^\sigma}\psi_{n,j}(\theta)Y^d_{j,k}(x') \\
    &= \sum_{j=0}^\infty\sum_{n=0}^\infty\sum_{k=1}^{d(j)}\frac{\langle F_{j,k},\psi_{n+j,j}\rangle_{L^2((0,\pi),(\sin\theta)^{d-1}d\theta)}}{(n+\frac{2j+d-1}{2})^\sigma}\psi_{n+j,j}(\theta)Y^d_{j,k}(x') \\
    &= \sum_{j=0}^\infty\sum_{k=1}^{d(j)}\left[\sum_{n=0}^\infty\frac{\langle F_{j,k},\psi_{n+j,j}\rangle_{L^2((0,\pi),(\sin\theta)^{d-1}d\theta)}}{(n+\frac{2j+d-1}{2})^\sigma}\psi_{n+j,j}(\theta)\right]Y^d_{j,k}(x'). \\
\end{align*}
From here, by taking into account the definition of $\psi_{n,j}$, the relations \eqref{Gegenbauer} and \eqref{trig pol} and Theorem \ref{Thm:Lp-Lq} with $\a=\b=\frac{d-2}{2}$ and $a=b=1$, we get
\begin{align*}
    & \|(-\Delta_{\S^d})^{-\sigma/2}f\|_{L^q(L^2(\S^d))}^q \\
    &= \int_0^\pi\left(\sum_{j=0}^\infty\sum_{k=1}^{d(j)}\left[\sum_{n=0}^\infty\frac{\langle F_{j,k},\psi_{n+j,j}\rangle_{L^2((0,\pi),(\sin\theta)^{d-1}d\theta)}}{(n+\frac{2j+d-1}{2})^\sigma} \psi_{n+j,j}(\theta)\right]^2\right)^{q/2}(\sin\theta)^{d-1}d\theta \\
    &= \int_0^\pi\Bigg(\sum_{j=0}^\infty\sum_{k=1}^{d(j)}\Bigg[\sum_{n=0}^\infty \frac{\langle (\sin\theta)^{-j}F_{j,k},\P_n^{(\a+j,\a+j)} \rangle_{L^2(d\mu_{\a,\a})}}{(n+\frac{2j+d-1}{2})^\sigma}(\sin\theta)^j\P_n^{(\a+j,\a+j)}(\t)\Bigg]^2\Bigg)^{q/2}(\sin\theta)^{d-1}d\theta \\
     &= \int_0^\pi\Bigg(\sum_{j=0}^\infty\sum_{k=1}^{d(j)}\Big[(\sin\theta)^j (\mathcal{J}^{\a+j,\a+j})^{-\sigma/2}\left((\sin\phi)^{-j}F_{j,k}\right)(\t)\Big]^2\Bigg)^{q/2}(\sin\theta)^{d-1}d\theta \\
     &\leq C\Bigg(\int_0^\pi\Big(\sum_{j=0}^\infty \sum_{k=1}^{d(j)}|F_{j,k}(\theta)|^2\Big)^{p/2}(\sin\theta)^{d-1}d\theta\Bigg)^{q/p}= C\|f\|_{L^p(L^2(\S^{d}))}^q.
\end{align*}
The result is proved.
\end{proof}

\subsection{The real projective space $P_d(\Real)$}

To deal with the real projective space we just have to consider even functions on the sphere (as done in the end of Section 2 of \cite{Sherman}). More precisely, a function defined on $P_d(\Real)$ can be identified with an even function on $\S^d\subset\Real^{d+1}$. Indeed, $s\longmapsto\pm s$ is the projection from $\S^d$ to $P_d(\Real)$. Since the spherical harmonics in $\S^d$ satisfy $Y_{n,j}^{d+1}(-x)=(-1)^nY^{d+1}_{n,j}(x)$, the space $L^2_{\mathrm{e}}(\S^d)$ of even functions in $L^2(\S^d)$ can be decomposed as $L^2_{\mathrm{e}}(\S^d)=\bigoplus_{n=0}^\infty\mathcal{H}_{2n}(\S^d)$. Hence, the fractional integral on $P_d(\Real)$ is given by
\begin{align*}
    (-\Delta_{P_d(\Real)})^{-\sigma/2}f(x) &= \sum_{n=0}^\infty\frac{1}{(2n+\frac{d-1}{2})^\sigma}\,\operatorname{Proj}_{\mathcal{H}_{2n}(\S^d)}(f)(x) \\
    &= \sum_{n=0}^\infty\frac{1}{(2n+\frac{d-1}{2})^\sigma}\sum_{j=1}^{\delta(n)}\langle f,Y_{2n,j}^{d+1}\rangle_{L^2(\S^d)}Y_{2n,j}^{d+1}(x),
\end{align*}
with $\delta(n)=(4n+d-1)\frac{(2n+d-2)!}{(2n)!(d-1)!}$. By using the coordinates introduced in the previous subsection \eqref{geodesicas}, we can obtain the following orthonormal basis of $L^2(P_d(\Real))$:
$$\varphi_{2n,j,k}(x)=\psi_{2n,j}(\theta)Y_{j,k}^d(x'),\quad x=\Phi(\theta,x'),~0<\theta<\pi,~x'\in\S^{d-1}.$$
Thus, Theorem \ref{Thm:esfera} for $P_d(\Real)$ is then established directly from the sphere case.

\subsection{The projective spaces $\CP$, $\HP$ and $\Ca$}

In this subsection we take some results from \cite[Section~4]{Sherman}. Let $M$ be any of the projective spaces $\CP$, $\HP$ or $\Ca$. Let $\tilde{\Delta}_M$ be the Laplace--Beltrami operator on $M$. We set
$$-\Delta_M:=-\tilde{\Delta}_M+\left(\frac{m+d}{2}\right)^2,$$
where $d=2,4,8$, $m=l-2,2l-3,3$, for $\CP$, $\HP$ and $\Ca$, respectively. We have the orthogonal direct sum decomposition $L^2(M)=\bigoplus_{n=0}^\infty\mathcal{H}_n(M)$. Each space $\mathcal{H}_n(M)$ is finite-dimensional and corresponds to the eigenspace of $\tilde{\Delta}_M$ with respect to the eigenvalue $-n(n+m+d)$. From here it is readily seen that $\mathcal{H}_n(M)=\{f\in C^\infty(M):-\Delta_Mf=(n+\frac{m+d}{2})^2f\}$. Denoting by $\operatorname{Proj}_{\mathcal{H}_n(M)}(f)$ the orthogonal projection of $f\in L^2(M)$ onto $\mathcal{H}_n(M)$, we can define, for $0<\sigma<1$, the fractional integral operator as $$(-\Delta_M)^{-\sigma/2}f=\sum_{n=0}^\infty\frac{1}{(n+\frac{m+d}{2})^\sigma}\,\operatorname{Proj}_{\mathcal{H}_n(M)}(f).$$

Let $\B^{d+1}$ be the unit ball in $\Real^{d+1}$ and
$$\omega(r):=c_\omega r^{-1}(1-r)^m,\quad 0<r<1,$$
with $c_\omega=\frac{\Gamma(m+d+1)}{\Gamma(d)\Gamma(m+1)}$.  According to \cite[Lemma~4.12]{Sherman} there is a bounded linear map $E:L^1(M)\to L^1(\B^{d+1},\omega(|x|)\,dx)$ such that for every $f\in L^1(M)$,
$$\int_Mf\,d\mu_M=\int_{\B^{d+1}}E(f)\omega(|x|)\,dx,$$
where $d\mu_M$ is the Riemannian measure on $M$. This allows us to define the mixed norm spaces $L^p(L^2(M))$, $1\leq p<\infty$, as the set of functions $f$ on $M$ for which the norm
$$\|f\|_{L^p(L^2(M))}=\Bigg(\int_0^1\Bigg(\int_{\S^d}|E(f)(rx')|^2 \,dx'\Bigg)^{p/2}~\omega(r)r^d\,dr\Bigg)^{1/p}$$
is finite. The spaces $L^p(L^2(M))$ are Banach spaces.

It is shown in \cite[Corollary~4.26]{Sherman} that $E(\mathcal{H}_n(M))=\mathcal{H}_n(\B^{d+1},\omega)$. Here, the set $\mathcal{H}_n(\B^{d+1},\omega)$ is the orthocompliment of the space $\mathcal{F}_{n-1}(\B^{d+1})$ of functions on $\B^{d+1}$ which are polynomials of degree less than or equal to $n-1$ in the variables $x$ and $|x|$, $x\in\B^{d+1}$, in $\mathcal{F}_n(\B^{d+1})$ with respect to the inner product on $L^2(\B^{d+1},\omega(|x|)\,dx)$. For $r=|x|$ we let $\partial/\partial r$ the radial derivative and $\tilde{\Delta}_{\S^d}$ the Laplace--Beltrami operator on the sphere $\S^d$. The spaces $\mathcal{H}_n(\B^{d+1},\omega)$ are eigenspaces of the differential operator
$$\Lambda=r(r-1)\frac{\partial^2}{\partial r^2}+\left((m+d+1)r-d\right)\frac{\partial}{\partial r}+\left(\frac{m+d}{2}\right)^2-\frac{1}{r}\tilde{\Delta}_{\S^d},$$
corresponding to the eigenvalue $\left(n+\frac{m+d}{2}\right)^2$, see \cite[Theorem~4.22]{Sherman}. In this way it is possible to write $L^2(\B^{d+1},\omega(|x|)\,dx)=\bigoplus_{n=0}^\infty\mathcal{H}_n(\B^{d+1},\omega)$. Moreover, $\mathcal{H}_n(\B^{d+1},\omega)$ is the orthogonal direct sum of subspaces $\mathcal{H}_{n,j}(\B^{d+1},\omega)$, $0\leq j\leq n$. A basis of $\mathcal{H}_{n,j}(\B^{d+1},\omega)$ is
$$\varphi^M_{n,j,k}(x)=\psi^M_{n,j}(r)Y^{d+1}_{j,k}(x'),\quad x=rx',~0<r<1,~x'\in\S^d,$$
where, for $P_n^{(\a,\b)}$ as in \eqref{eq1},
$$\psi^M_{n,j}(r)=a^M_{n,j}r^jP_{n-j}^{(m,2j+d-1)}(2r-1),$$
and the set $\{Y^{d+1}_{j,k}\}_{1\leq k\leq d(j)}$ is an orthonormal basis of the space of spherical harmonics of degree $j$, and $d(j):=(2j+d-1)\frac{(j+d-2)!}{j!(d-1)!}$. By virtue of \eqref{trig pol} and \eqref{que} the normalizing constant is
$$a^M_{n,j}=\|\varphi^M_{n,j,k}\|_{L^2(\B^{d+1},\omega(|x|)\,dx)}^{-1}=c_\omega^{-1/2}d_{n-j}^{m,2j+d-1}.$$
Thus,
\begin{equation}\label{eq44}
E\left(\operatorname{Proj}_{\mathcal{H}_n(M)}(f)\right)=\sum_{j=0}^n\sum_{k=1}^{d(j)}\langle E(f),\varphi_{n,j,k}^{M}\rangle_{L^2(\B^{d+1},\omega(|x|)dx)}\varphi_{n,j,k}^M.
\end{equation}

\begin{proof}[Proof of Theorem \ref{Thm:proyectivos}]
Given a function $f$ on $M$ we use the notation $F:=E(f)$. Parallel to the case of the sphere, we can write
$$F(x)=\sum_{j=0}^\infty\sum_{k=1}^{d(j)}F_{j,k}(r)Y^{d+1}_{j,k}(x'),\quad\hbox{where}~x=rx'\in\B^{d+1},~r\in (0,1),~x'\in\mathbb{S}^d,$$
and
$$F_{j,k}(r)=\int_{\mathbb{S}^{d}}F(rx')\overline{Y^{d+1}_{j,k}(x')}\,dx'.$$
Then
$$\|f\|_{L^p(L^2(M))}=\Bigg(\int_0^1\Big(\sum_{j=0}^\infty\sum_{k=1}^{d(j)}|F_{j,k}(r)|^2\Big)^{p/2} \omega(r)r^ddr\Bigg)^{1/p}.$$
Proceeding as in Subsection \ref{subsection:redonda},
$$\langle F,\varphi^M_{n,j,k}\rangle_{L^2(\B^{d+1},\omega(|x|)\,dx)}=\langle F_{j,k},\psi^M_{n,j}\rangle_{L^2((0,1),\omega(r)r^ddr)},$$
and then, by \eqref{eq44},
\begin{equation}\label{primera}
\begin{aligned}
    E\left((-\Delta_M)^{-\sigma/2}f\right)(x) &= \sum_{n=0}^\infty\sum_{j=0}^n\sum_{k=1}^{d(j)}\frac{\langle F,\varphi^M_{n,j,k}\rangle_{L^2(\B^{d+1},\omega(|x|)dx)}}{(n+\frac{m+d}{2})^\sigma}\,\varphi_{n,j,k}^M(x) \\
    &= \sum_{j=0}^\infty\sum_{k=1}^{d(j)}\left[\sum_{n=0}^\infty\frac{\langle F_{j,k},\psi^M_{n+j,j}\rangle_{L^2((0,1),\omega(r)r^ddr)}}{(n+\frac{2j+m+d}{2})^\sigma}\,\psi^M_{n+j,j}(r)\right]Y^{d+1}_{j,k}(x'). \\
\end{aligned}
\end{equation}
The change of variable $2r-1=\cos\theta$ implies
\begin{equation}\label{segunda}
\langle F_{j,k},\psi^M_{n+j,j}\rangle_{L^2((0,1),\omega(r)r^ddr)}= c_\omega^{1/2}\langle(\cos\tfrac{\theta}{2})^{-2j}G_{j,k},\P_n^{(m,2j+d-1)}\rangle_{L^2(d\mu_{m,2j+d-1})},
\end{equation}
where we have defined $G_{j,k}(\t):=F_{j,k}(\frac{1+\cos\t}{2})$. From \eqref{primera}, \eqref{segunda}, the change of variable $2r-1=\cos\theta$ and Theorem \ref{Thm:Lp-Lq} with $\a=m$, $\b=d-1$, $a=0$ and $b=2$, we obtain
\begin{align*}
    &\|(-\Delta_M)^{-\sigma/2}f\|_{L^q(L^2(M))}^q = \int_0^1\Bigg(\sum_{j=0}^\infty\sum_{k=1}^{d(j)}\Big[\sum_{n=0}^\infty\frac{\langle F_{j,k},\psi^M_{n+j,j}\rangle_{L^2((0,1),\omega(r)r^ddr)}}{(n+\frac{2j+m+d}{2})^\sigma}\,\psi^M_{n+j,j}(r)\Big]^2\Bigg)^{q/2}\omega(r)r^ddr \\
    &= c_\omega\int_0^\pi\Bigg(\sum_{j=0}^\infty\sum_{k=1}^{d(j)}\Big[\sum_{n=0}^\infty\frac{\langle (\cos\frac{\theta}{2})^{-2j}G_{j,k},\P_n^{(\a,\b+2j)}\rangle_{L^2(d\mu_{\a,\b+2j})}}{(n+\frac{2j+m+d}{2})^\sigma} (\cos\tfrac{\theta}{2})^{2j}\P_n^{(\a,\b+2j)}(\theta)\Big]^2\Bigg)^{q/2}d\mu_{\a,\b}(\theta) \\
    &= c_\omega\int_0^\pi\Bigg(\sum_{j=0}^\infty\sum_{k=1}^{d(j)}\left[(\cos\tfrac{\theta}{2})^{2j}(\J^{(\a,\b+2j)})^{-\sigma/2} \left((\cos\tfrac{\phi}{2})^{-2j}G_{j,k}\right)(\theta)\right]^2\Bigg)^{q/2}d\mu_{\a,\b}(\theta) \\
    &\leq C\Bigg(\int_0^\pi\Big(\sum_{j=0}^\infty \sum_{k=1}^{d(j)}|G_{j,k}(\t)|^2\Big)^{p/2}d\mu_{\a,\b}(\theta)\Bigg)^{q/p} \\
    &= C\Bigg(\int_0^1\Big(\sum_{j=0}^\infty \sum_{k=1}^{d(j)}|F_{j,k}(r)|^2\Big)^{p/2}\omega(r)r^ddr\Bigg)^{q/p}= C\|f\|_{L^p(L^2(M))}^q.
\end{align*}
\end{proof}

\begin{rem}
A basis of orthogonal polynomials in $L^2(\B^{d+1},W_\mu(x)dx)$, with the weight $W_\mu(x)=(1-|x|^2)^{\mu-1/2}$, for $\mu>-1/2$ and $x\in\B^{d+1}$, can be given in terms of products of spherical harmonics in $\S^d$ and Jacobi polynomials evaluated at $2|x|^2-1$, see \cite[Chapter~2]{Dunkl-Xu}. For this system there is an underlying linear second order differential operator $L_\mu$, whose negative powers $(L_\mu)^{-\sigma/2}$ can be considered. By defining in a natural way the mixed norm spaces on $\B^{d+1}$ with the measure $W_\mu(x)dx$, it is then possible to get $L^p(L^2(\B^{d+1},W_\mu(x)dx))-L^q(L^2(\B^{d+1},W_\mu(x)dx))$ estimates for $(L_\mu)^{-\sigma/2}$ in this general setting by just applying Theorem \ref{Thm:Lp-Lq} as above.
\end{rem}




\begin{thebibliography}{10}

\bibitem{Anker-Annals} J.-P. Anker,
{$L^p$ Fourier multipliers on Riemannian symmetric spaces of the nonocompact type},
\textit{Ann. of Math. (2)}
\textbf{132} (1990), 597--628.

\bibitem{Anker-Ji} J.-P. Anker and L. Ji,
{Heat kernel and Green function estimates on noncompact symmetric spaces},
\textit{Geom. Funct. Anal.}
\textbf{9} (1999), 1035--1091.

\bibitem{Benedek-Panzone} A. Benedek and R. Panzone,
{The space $L^p$, with mixed norm},
\textit{Duke Math. J.}
\textbf{28} (1961), 301--324.

\bibitem{BGM} M. Berger, P. Gauduchon and E. Mazet,
\textit{Le Spectre d'une Vari\'et\'e Riemmannienne},
Lecture Notes in Mathematics \textbf{194},
Springer-Verlag,
Berlin-New York, 1971.

\bibitem{Bernardis-Salinas} A. Bernardis and O. Salinas,
{Two-weight norm inequalities for the fractional maximal operator on spaces of homogeneous type},
\textit{Studia Math.}
\textbf{108} (1994), 201--207.

\bibitem{Dijksma-Koornwinder} A. Dijksma and T. H. Koornwinder,
{Spherical harmonics and the product of two Jacobi polynomials},
\textit{Indag. Math.}
\textbf{33} (1971), 191--196.

\bibitem{Dunkl-Xu} C. F. Dunkl and Y. Xu,
\textit{Orthogonal Polynomials of Several Variables},
Cambridge University Press,
Cambridge, 2001.

\bibitem{Duo} J. Duoandikoetxea,
\textit{Fourier Analysis},
Graduate Studies in Mathematics \textbf{29},
American Mathematical Society,
Providence, RI, 2001.

\bibitem{Helgason-Duality} S. Helgason,
{A duality for symmetric spaces with applications to group representations},
\textit{Advances in Math.}
\textbf{5} (1970), 1--154.

\bibitem{Helgason-book} S. Helgason,
\textit{Differential Geometry and Symmetric Spaces},
Pure and Applied Mathematics \textbf{XII},
Academic Press,
New York-London, 1962.

\bibitem{Helgason-libro} S. Helgason,
\textit{Groups and Geometric Analysis},
Pure and Applied Mathematics \textbf{113},
Academic Press, Inc.,
Orlando, FL, 1984.

\bibitem{Helgason-Bulletin} S. Helgason,
{Radon-Fourier transforms on symmetric spaces and related group representations},
\textit{Bull. Amer. Math. Soc.}
\textbf{71} (1965), 757--763.

\bibitem{Helgason-Acta} S. Helgason,
{The Radon transform on Euclidean spaces, compact two-point homogeneous spaces and Grassmann manifolds},
\textit{Acta Math.}
\textbf{113} (1965), 153--180.

\bibitem{Keel-Tao} M. Keel and T. Tao,
{Endpoint Strichartz estimates},
\textit{Amer. J. Math.}
\textbf{120} (1998), 955--980.

\bibitem{Marcinkiewicz-Zygmund} J. Marcinkiewicz and A. Zygmund,
{Quelques in\'egalit\'es pour les op\'erations lin\'eaires},
\textit{Fund. Math.}
\textbf{32} (1939), 115--121.

\bibitem{Muckenhoupt} B. Muckenhoupt,
{Transplantation theorems and multiplier theorems for Jacobi series},
\textit{Mem. Amer. Math. Soc.}
\textbf{64} (1986).

\bibitem{Muckenhoupt-Stein} B. Muckenhoupt and E. M. Stein,
{Classical expansions and their relation to conjugate harmonic functions},
\textit{Trans. Amer. Math. Soc.}
\textbf{118} (1965), 17--92.

\bibitem{Nowak-Sjogren Calderon} A. Nowak and P. Sj\"ogren,
{Calder\'on-Zygmund operators related to Jacobi expansions},
\textit{J. Fourier Anal. Appl.}
\textbf{18} (2012), 717--749.

\bibitem{OS} G. \'Olafsson and H. Schlichtkrull,
{A local Paley--Wiener theorem for compact symmetric spaces},
\textit{Adv. Math.}
\textbf{218} (2008), 202--215.

\bibitem{Rubio-Duke} J. L. Rubio de Francia,
{Transference principles for radial multipliers},
\textit{Duke Math. J.}
\textbf{58} (1989), 1--19.

\bibitem{Sherman-Bulletin} T. O. Sherman,
{Fourier analysis on compact symmetric space},
\textit{Bull. Amer. Math. Soc.}
\textbf{83} (1977), 378--380.

\bibitem{Sherman} T. O. Sherman,
{The Helgason Fourier transform for compact Riemannian symmetric spaces of rank one},
\textit{Acta Math.}
\textbf{164} (1990), 73--144.

\bibitem{Stein} E. M. Stein,
\textit{Topics in Harmonic Analysis Related to the Littlewood-Paley Theory},
Annals of Mathematics Studies \textbf{63},
Princeton University Press,
Princeton, NJ, 1970.

\bibitem{Szego} G. Szeg\"{o},
\textit{Orthogonal Polynomials},
fourth edition, American Mathematical Society, Colloquium Publications
\textbf{XXIII},
American Mathematical Society,
Providence, 1975.

\bibitem{Thangavelu} S. Thangavelu,
{Holomorphic Sobolev spaces associated to compact symmetric spaces},
\textit{J. Funct. Anal.}
\textbf{251} (2007), 438--462.

\bibitem{Wang} H.-C. Wang,
{Two--point homogeneous spaces},
\textit{Ann. of Math. (2)}
\textbf{55} (1952), 177--191.

\end{thebibliography}
\end{document}